\def\bC{\mathbf{C}}
\def\bR{\mathbf{R}}
\def\bZ{\mathbf{Z}}
\newcommand{\beq}{\begin{equation}}
\newcommand{\eeq}{\end{equation}}
\theoremstyle{plain}
\newtheorem{theorem}[subsubsection]{Theorem}
\newtheorem{proposition}[subsubsection]{Proposition}
\newtheorem{lemma}[subsubsection]{Lemma}
\theoremstyle{remark}
\theoremstyle{definition}
\DeclareMathOperator{\image}{im}
\DeclareMathOperator{\POrth}{PO}
\DeclareMathOperator{\PGL}{PGL}
\DeclareMathOperator{\Hom}{Hom}
\DeclareMathOperator{\Ext}{Ext}
\DeclareMathOperator{\Perv}{Perv}
\DeclareMathOperator{\SL}{SL}
\DeclareMathOperator{\Gr}{Gr}
\DeclareMathOperator{\ch}{char}
\DeclareMathOperator{\id}{id}
\DeclareMathOperator{\soc}{soc}
\DeclareMathOperator{\Rep}{Rep}
\DeclareMathOperator{\stab}{stab}
\DeclareMathOperator{\odd}{odd}
\DeclareMathOperator{\even}{even}
\DeclareMathOperator{\quan}{qnt}
\swapnumbers \numberwithin{equation}{section}
\title{The real affine Grassmannian and quantum SL(2)}
\author{Mark Macerato and Jeremy Taylor}
\begin{document}

\begin{abstract}
We prove that the category of equivariant perverse sheaves on the affine Grassmannian of $\PGL(2, \bR)$ is highest weight and we construct the projective objects. Moreover we prove that the category of perverse sheaves on the odd component is equivalent to the principal block of Lusztig's quantum $\SL(2)$ at a primitive fourth root of unity.
\end{abstract}

\maketitle

\section{Introduction}

\subsection{The real affine Grassmannian}
Let $G \coloneqq \PGL(2, \bR)$ and let $N \subset B$ be the unipotent radical of the Borel. Set $O \coloneqq \bR[[t]]$ and $K \coloneqq \bR((t))$. Let $\Gr \coloneqq G_K/G_O$ be the real affine Grassmannian. The $G_O$-orbits $i_n: \Gr^n \coloneqq G_Ot^n \hookrightarrow \Gr$ are indexed by nonnegative integers. There are two connected components $\Gr^{\even}$ and $\Gr^{\odd}$ containing the even and odd $G_O$-orbits respectively.

Fix a coefficient field $k$ of characteristic 0. Let $\Perv_{G_O}(\Gr)$ be the category of $G_O$-equivariant perverse sheaves (see section \ref{RealPerverse} for conventions on half-integer shifts). This real Satake category is not semisimple, in contrast to \cite{MV}.

The stabilizer in $G$ of $t^n \in \Gr$ is either $B$ if $n > 0$ or else $G$ if $n = 0$. In both cases its component group is $\pi_0(\stab_G(t^n)) = \bZ/2$.
Therefore $\Gr^n$ admits two perverse $G_O$-equivariant local systems.

Denote the standard, middle, and costandard extension of the trivial representation $k^+ \in \Rep(\bZ/2)$ by \[\Delta(n)^+ \coloneqq i_{n!} k^+_{\Gr^n}[n/2], \quad L(n)^+ \coloneqq i_{n!*} k^+_{\Gr^n}[n/2], \quad \nabla(n)^+ \coloneqq i_{n*} k^+_{\Gr^n}[n/2].\]

Denote the standard, middle, and costandard extension of the sign representation $k^- \in \Rep(\bZ/2)$ by \[\Delta(n)^- \coloneqq i_{n!} k^-_{\Gr^n}[n/2], \quad L(n)^- \coloneqq i_{n!*} k^-_{\Gr^n}[n/2], \quad \nabla(n)^- \coloneqq i_{n*} k^-_{\Gr^n}[n/2].\]

We prove that $\Delta(n)^{\pm}$ and $\nabla(n)^{\pm}$ are perverse, hence $\Perv_{G_O}(\Gr)$ is a highest weight category. Moreover we explicitly construct the indecomposable projectives. 

On the odd component, \[\Perv_{G_O}(\Gr^{\odd}) \simeq \Perv_{G_O}(\Gr^{\odd})^+ \oplus \Perv_{G_O}(\Gr^{\odd})^-\] splits as a sum of two blocks. Here $\Perv_{G_O}(\Gr^{\odd})^{\pm}$ are the full subcategories of perverse sheaves whose Jordan-Holder factors are all of the form $L(n)^{\pm}$ respectively.

\subsection{Comparison with quantum SL(2)}
Let $\Rep_i(\SL(2))$ be the category of (finite dimensional, type 1) representations of Lusztig's quantum $\SL(2)$ (defined in \cite{Lus88}) specialized at a primitive fourth root of unity. Let $\Rep_i(\SL(2))^{\even}$ be the principal block.

By constructing indecomposable projective generators on both sides, then explicitly calculating all maps between them, we prove an equivalence of categories \beq \label{RealQuantum}\Perv_{G_O}(\Gr^{\odd})^+ \simeq \Rep_i(\SL(2))^{\even}.\eeq
The $\Rep(\SL(2))$-action on $\Perv_{G_O}(\Gr^{\odd})^+$ via Nadler's nearby cycles functor corresponds to the action on $\Rep_i(\SL(2))^{\even}$ via quantum Frobenius pullback.

\subsection{Connection to symmetric varieties}
Let $X \coloneqq \PGL(2, \bC)/\POrth(2, \bC)$ be the complex symmetric variety corresponding to the real group $G$. The real symmetric equivalence of \cite{CN24} says $\Perv_{G_O}(\Gr) \simeq \Perv_{G_{O_{\bC}}}(X_{K_{\bC}})$, where $O_{\bC} \coloneqq \bC[[t]]$ and $K_{\bC} \coloneqq \bC((t))$. This category would appear in the relative local Langlands conjecture of \cite{BZSV}, were it not excluded for containing a type N root.

\subsection{Acknowledgements}
We are very grateful to Tsao-Hsien Chen for pointing out an important sign error in an earlier draft of this paper. This version contains the same methods and ideas, but different results.

This paper was inspired by Gurbir Dhillon and Jonathan Wang's suggestion that loop spaces of certain symmetric varities are related to quantum groups at $q = i$. We also thank David Nadler for helpful discussions. M.M. and J.T. were partially supported by NSF grant DMS-1646385. M.M. was partially supported by an NSF graduate research fellowship.

\section{The real character functor}
The real character functor is defined in section 8.5 of \cite{Nad} using semi-infinite orbits. In contrast to theorem 3.6 of \cite{MV}, it does not compute cohomology. Here we calculate some values of the real character functor.

\subsection{Perverse sheaves}\label{RealPerverse}
On each connected component of $\Gr$, the dimensions of $G_O$-orbits have the same parity. Therefore it is possible to define the abelian category of perverse sheaves.
Since the $G_O$-orbits in $\Gr^{\odd}$ have odd dimension, it is convenient to allow half-integer shifts. (The differential still increases degree by 1.)

Let $\Perv_{G_O}(\Gr)$ be the category of $G_O$-equivariant half-integer graded complexes of sheaves $F$ such that $i_n^* F$ and $i_n^! F$ are finite rank local systems on $\Gr^n$ concentrated in degrees $\leq -n/2$ and $\geq -n/2$ respectively.

\subsection{Semi-infinite orbits}
Let $S^m \coloneqq N_K t^m \subset \Gr$ and  $T^m \coloneqq N^-_K t^m \subset \Gr$ be the semi-infinite orbits.

\begin{proposition}\label{Intersect}
The only nonempty intersections between semi-infinite and spherical orbits are
\begin{enumerate}
\item[-] $S^m \cap \Gr^n = \begin{cases} \bR^n & m = n \\ \bR^d - \bR^{d-1} & n + m = 2d \text{ for } 0 < d < n  \\ 
\bR^0 & m = -n,\end{cases}$
\item[-] $T^m \cap \Gr^n = \begin{cases} \bR^0 & m = n \\ \bR^d - \bR^{d-1} & n = m + 2d \text{ for } 0 < d < n  \\ 
\bR^n & m = -n,\end{cases}$ 
\end{enumerate}
and $\big(\begin{smallmatrix} 1 & \\ & -1 \end{smallmatrix}\big) \in G$ acts by multiplication by $-1$.
\end{proposition}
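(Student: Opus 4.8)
The statement is really a computation about the geometry of the real affine Grassmannian of $\PGL(2,\bR)$, where everything is very explicit. I would begin by choosing coordinates. Writing $G_K = \PGL(2,\bR((t)))$, the orbit $\Gr^n = G_O t^n$ consists of cosets $g t^n G_O$. The semi-infinite orbit $S^m = N_K t^m$ is the image of $\big(\begin{smallmatrix} 1 & a \\ 0 & 1 \end{smallmatrix}\big) t^m$ as $a$ ranges over $\bR((t))$, and $T^m = N^-_K t^m$ is the analogous image with lower-triangular unipotents. So the whole question is: for which $a \in \bR((t))$ (modulo the subgroup that fixes $t^m G_O$, namely $a \in \bR[[t]]$) does $\big(\begin{smallmatrix} 1 & a \\ 0 & 1 \end{smallmatrix}\big) t^m$ land in $G_O t^n$? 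This is equivalent to an elementary divisor computation: the matrix $\big(\begin{smallmatrix} t^m & a t^m \\ 0 & t^m \end{smallmatrix}\big)$, viewed projectively, i.e. the lattice it spans in $O^2$, has elementary divisors $(t^n, t^{-n})$ up to the overall scalar, precisely when the valuations work out. The plan is to make this explicit.

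**Key steps for $S^m \cap \Gr^n$.** Fix a Laurent series $a = \sum_{k \geq k_0} a_k t^k$ with $a_{k_0} \neq 0$, so $v(a) = k_0$ (allowing $v(a) = +\infty$ when $a = 0$). The point $\big(\begin{smallmatrix} 1 & a \\ 0 & 1 \end{smallmatrix}\big) t^m G_O \in \Gr$ corresponds, after clearing denominators and rescaling by the appropriate power of $t$, to the $O$-lattice spanned by the columns of $\big(\begin{smallmatrix} t^m & a t^m \\ 0 & t^m \end{smallmatrix}\big)$. I would compute its elementary divisors by the standard row/column reduction over the DVR $O$: if $v(a) \geq 0$ then $a t^m \in O \cdot t^m$, so the lattice is just $t^m O^2$ and we are in $\Gr^{|m|}$ via the projective normalization; the condition $S^m \cap \Gr^n \neq \emptyset$ with $v(a)\geq 0$ forces $n = m$, giving the first case, and the parameter $a \bmod t^m$ runs over $\bR^m$ — wait, one must be careful: modulo the stabilizer, $a$ lives in $\bR((t))/\bR[[t]]$, so the genuinely new coordinates are the $a_k$ with $k < 0$; re-examining, for $m = n$ the intersection is the image of $a \in \bR[[t]]$ which after quotienting gives a point, but the proposition says $\bR^n$, so the correct bookkeeping is that $S^n \cap \Gr^n$ is the big cell $N_O t^n/(N_O \cap t^n G_O t^{-n})$, an affine space of dimension $n$. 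For $v(a) = -j < 0$, I would do the Smith normal form: factor out $t^{m-j}$ and reduce $\big(\begin{smallmatrix} t^j & a t^j \\ 0 & t^j \end{smallmatrix}\big)$ (with $v(at^j) = 0$) to diagonal form; column operations using the unit $a t^j$ clear the top right, then one finds elementary divisors $t^0$ and $t^{2j}$, so the lattice sits in $\Gr^{?}$ with the two valuations being $m - j$ and $m + j$; the projective class has type $n$ where $2d = n + m$ with $d = m - j$ the smaller exponent shifted appropriately — I would pin down the exact indexing so it matches $n + m = 2d$, $0 < d < n$, and check the stratification $\bR^d - \bR^{d-1}$ arises because the stratum of a fixed $d$ is cut out by $v(a) = d - n$ exactly (an open condition inside $v(a) \leq d-n$), i.e. the leading coefficient $a_{d-n} \neq 0$, giving $(\bR \setminus 0) \times \bR^{d-1}$, which as a space is $\bR^d - \bR^{d-1}$ in their notation. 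The extreme case $v(a) \to$ as negative as possible, $d \to 0$, yields the point $\bR^0$ at $m = -n$.

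**The $T^m$ case and the sign.** By symmetry, $T^m \cap \Gr^n$ is handled by the same computation with $N^-$ in place of $N$, equivalently by applying the Cartan/Weyl-group involution $t \mapsto t^{-1}$ (or conjugating by the standard Weyl element $w_0$), which swaps $S^m \leftrightarrow T^{-m}$ and fixes each $\Gr^n$; this immediately converts the three cases of the $S$-computation into the three cases listed for $T$. Finally, for the action of $s = \big(\begin{smallmatrix} 1 & 0 \\ 0 & -1 \end{smallmatrix}\big)$: since $s$ normalizes both $N$ and $N^-$ and fixes every $t^n G_O$ (as $s \in G_O$), it acts on each intersection $S^m \cap \Gr^n$; on the $N$-coordinate it sends $\big(\begin{smallmatrix} 1 & a \\ 0 & 1 \end{smallmatrix}\big)$ to $\big(\begin{smallmatrix} 1 & -a \\ 0 & 1 \end{smallmatrix}\big)$, i.e. $a \mapsto -a$, which is exactly multiplication by $-1$ on each affine-space coordinate. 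I would remark that this is the key point linking to the $\bZ/2 = \pi_0(\stab)$ appearing in the introduction: $-1$ acts on the one-dimensional pieces, so the local system data is governed by whether a sheaf is $s$-equivariant trivially or by the sign character.

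**Main obstacle.** The geometry is elementary, so there is no deep obstacle; the real work is bookkeeping. The one genuinely delicate point is getting the index relations exactly right — matching "$n + m = 2d$ for $0 < d < n$" to the valuation of $a$, and making sure the half-open stratum $\bR^d - \bR^{d-1}$ (rather than a closed affine space) genuinely appears, which comes down to the observation that fixing the isomorphism type $\Gr^n$ pins down $v(a)$ exactly (not just an inequality), so the leading coefficient is a nonzero real parameter while the lower-order coefficients are free. I would also need to be slightly careful that we are in $\PGL_2$ rather than $\GL_2$ or $\SL_2$: the projective quotient is what makes the two elementary divisors $t^{m-j}, t^{m+j}$ collapse to the single invariant $n = j$ type, and it is what makes $s \in \PGL_2(O)$ act nontrivially on orbits while fixing base points — I would state this normalization once at the start and use it throughout.
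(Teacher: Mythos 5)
Your approach is essentially the paper's: its proof writes down the explicit parametrization of $T^m \cap \Gr^n$ by lower-triangular matrices $\bigl(\begin{smallmatrix} t^m & \\ a_{-d}t^{-d}+\cdots+a_{-1}t^{-1} & 1\end{smallmatrix}\bigr)$ with $a_{-d}\neq 0$, checks that conjugation by $\mathrm{diag}(1,-1)$ sends $a\mapsto -a$, and defers the remaining cases to Lemma A.6.1 of [GR] --- which is exactly the elementary-divisor bookkeeping you carry out for $S^m$ before transporting to $T^m$ by the Weyl element. One small correction: your displayed matrix $\bigl(\begin{smallmatrix} t^m & at^m \\ 0 & t^m\end{smallmatrix}\bigr)$ is projectively just the unipotent (it should be $\bigl(\begin{smallmatrix} t^m & a \\ 0 & 1\end{smallmatrix}\bigr)$, with stabilizer $a\in t^m\bR[[t]]$), but your final indexing $v(a)=d-n$ and the count of $d$ free coefficients, the leading one nonzero, are correct.
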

\begin{proof}
If $n = m + 2d$ for $0 < d < n$ then \[T^m \cap \Gr^n \; = \; \left\{ \begin{pmatrix} t^m & \\ a_{-d}t^{-d} + \dots a_{-1}t^{-1} & 1 \end{pmatrix} \text{ such that } a_{-d} \neq 0 \right\} \; = \; \bR^d - \bR^{d-1}\] and moreover the action \[\begin{pmatrix} 1 & \\  & -1 \end{pmatrix}\begin{pmatrix} t^m & \\ a_{-d}t^{-d} + \dots a_{-1}t^{-1} & 1 \end{pmatrix} \begin{pmatrix} 1 & \\ & -1 \end{pmatrix} = \begin{pmatrix} t^m & \\ -a_{-d}t^{-d} - \dots a_{-1}t^{-1} & 1 \end{pmatrix}\] is multiplication by $-1$ on $\bR^d - \bR^{d-1}$. The other cases are similar, see lemma A.6.1 of \cite{GR} for more details.
\end{proof}

\subsection{The character functor}
Consider the character functor \[\ch: \Perv_{G_O}(\Gr) \rightarrow \Rep(\bZ/2)_{\bZ}, \quad F \; \mapsto \; \bigoplus_{m \in \bZ} \Gamma_{T^m}(F)[m/2](-m) \simeq \bigoplus_{m \in \bZ} \Gamma_c(S^m, F)[m/2](-m)\] taking values in $\bZ$-graded representations of $\pi_0(T) = \bZ/2$. Here $(-m)$ denotes the $\bZ$-grading. 

Corollary 8.5.1 of \cite{Nad} says that $\ch$ is exact. Moreover $\ch$ is faithful: if $\Gr^n$ is open in the support of $F \in \Perv_{G_O}(\Gr)$ then $\ch F$ is nonzero in weight $n$.

\begin{proposition}\label{CharStar}
The sheaves $\Delta(n)^{\pm}$ and $\nabla(n)^{\pm}$ are perverse. Moreover their characters are \beq \label{CharStarEq} \ch(\Delta(n)^+) \simeq \ch(\nabla(n)^+) \simeq k^+(n) \oplus (k^+ \oplus k^-)(n-2) \oplus \cdots (k^+ \oplus k^-)(-n+2) \oplus (k^-)^{\otimes n}(-n).\eeq The character of $\Delta(n)^-$ and $\nabla(n)^-$ is given by tensoring with the sign representation.
\end{proposition}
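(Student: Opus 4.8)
The plan is to compute the weight functors $\Gamma_c(S^m,-)\simeq\Gamma_{T^m}(-)$ of the standard objects directly from Proposition~\ref{Intersect}; this gives perversity and the character formula at once, and the statements for $\nabla(n)^{\pm}$ then follow by Verdier duality. Write $s\coloneqq\big(\begin{smallmatrix}1 & \\ & -1\end{smallmatrix}\big)$, a generator of the $\bZ/2$ acting throughout. Since $\Delta(n)^+=i_{n!}k^+_{\Gr^n}[n/2]$ is a $!$-extension, base change for $(-)_!$ yields a canonical $s$-equivariant isomorphism $\Gamma_c(S^m,\Delta(n)^+)\simeq\Gamma_c(S^m\cap\Gr^n,\,k)[n/2]$ for every $m$. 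The support half of perversity is automatic, $\Delta(n)^+$ being the $!$-extension of a local system in perverse degree; for the cosupport half it suffices, via the isomorphism $\Gamma_{T^m}(-)\simeq\Gamma_c(S^m,-)$ used to define $\ch$, to check that $\Gamma_c(S^m,\Delta(n)^+)$ lies in cohomological degrees $\ge m/2$ for all $m$. In fact it will be concentrated in degree exactly $m/2$, which also places $\ch(\Delta(n)^+)$ in degree $0$.

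By Proposition~\ref{Intersect}, $S^m\cap\Gr^n$ is empty unless $m\in\{n,n-2,\dots,-n\}$, and it equals $\bR^n$ when $m=n$, a copy of $\bR^{d-1}\times(\bR\setminus\{0\})$ when $m=2d-n$ with $0<d<n$, and a point when $m=-n$; in every case $s$ acts on all coordinates by $-1$. Hence $\Gamma_c(S^m\cap\Gr^n,k)$ is, respectively: $k$ in cohomological degree $n$ with $s$ acting by $(-1)^n$, i.e.\ $(k^-)^{\otimes n}$; the rank-two module $H^{d-1}_c(\bR^{d-1})\otimes H^1_c(\bR\setminus\{0\})$ in degree $d$, on which $s$ has eigenvalues $\pm1$ (the swap of the two half-lines has eigenvalues $\pm1$, and the factor $(-1)^{d-1}$ from the $\bR^{d-1}$ only permutes them), i.e.\ $k^+\oplus k^-$; and $k$ in degree $0$ with trivial action, i.e.\ $k^+$. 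After the shift $[n/2]$ these sit in degrees $n/2$, $d-n/2$, $-n/2$, each equal to $m/2$, so $\Delta(n)^+$ is perverse. The $m$-th summand of $\ch$ carries the further shift $[m/2]$ and sits in $\bZ$-grading $-m$; everything therefore lands in degree $0$, contributing $(k^-)^{\otimes n}$ in grading $-n$, $k^+\oplus k^-$ in grading $n-2d$ for $d=1,\dots,n-1$, and $k^+$ in grading $n$. This is precisely \eqref{CharStarEq}.

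For $\Delta(n)^-$ the local system $k^-$ merely twists the $s$-action on each $\Gamma_c(S^m\cap\Gr^n,-)$ by the sign character, so $\ch(\Delta(n)^-)\simeq\ch(\Delta(n)^+)\otimes k^-$ and perversity is unaffected. For the costandard objects, note that $s$ acts on $T_{t^n}\Gr^n\simeq\bR^n$ by $-1$ on each coordinate (it is $-1$ on the positive root line), hence scales the orientation of $\Gr^n$ by $(-1)^n$; thus $\mathbb{D}\Delta(n)^{\pm}$ is $\nabla(n)^{\pm}$ for $n$ even and $\nabla(n)^{\mp}$ for $n$ odd. Since $\mathbb{D}$ is exact this gives perversity of the $\nabla(n)^{\pm}$; and since $\ch$ is compatible with $\mathbb{D}$ in the expected way ($s$-equivariant linear dual together with negation of the $\bZ$-grading), while reversing the $\bZ$-grading of \eqref{CharStarEq} is the same as tensoring it with $(k^-)^{\otimes n}$, one deduces $\ch(\nabla(n)^{\pm})\simeq\ch(\Delta(n)^{\pm})$. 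Alternatively, $\Gamma_{T^m}(\nabla(n)^{\pm})$ can be computed straight from the $T^m\cap\Gr^n$ half of Proposition~\ref{Intersect}, which runs in exact parallel.

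The real difficulty is not the cohomology of real cells but the bookkeeping of the $\bZ/2$-equivariant (equivalently orientation) data: the half-integer shifts, the sign by which $s$ acts on each cell in Proposition~\ref{Intersect}, and the orientation character of $\Gr^n$ must all be tracked consistently, since one sign slip changes the value of $\ch$—precisely the kind of error corrected relative to an earlier draft. A secondary point requiring care is making precise, in the half-integer conventions of section~\ref{RealPerverse}, both the compatibility of $\ch$ with Verdier duality and the criterion that the cosupport condition is equivalent to $\Gamma_{T^m}(-)$ lying in degrees $\ge m/2$ for all $m$.
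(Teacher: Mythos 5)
Your computation of the characters is correct and is essentially the paper's: base change reduces $\ch(\Delta(n)^+)$ to the compactly supported cohomology of the intersections $S^m\cap\Gr^n$, and Proposition \ref{Intersect}, together with your (welcome, and more explicit than the paper's) bookkeeping of the $\bZ/2$-action on each cell, gives \eqref{CharStarEq}. The genuine gap is the step you defer as ``a secondary point requiring care'': the claim that the cosupport half of perversity ``suffices to check'' on $\Gamma_c(S^m,-)\simeq\Gamma_{T^m}(-)$. The cosupport condition is a condition on the costalks $i_j^!\Delta(n)^{\pm}$ along the \emph{spherical} orbits, not the semi-infinite ones, and the equivalence of the two is precisely the real analogue of Theorem 3.5 of \cite{MV} --- which cannot be quoted here (the paper warns at the start of the section that the real character functor, unlike the complex one, does not compute cohomology). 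As written, your argument assumes the one statement that actually needs proof.

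The fix is short and uses only the two properties of $\ch$ recorded just before the proposition: exactness (Corollary 8.5.1 of \cite{Nad}) and faithfulness. Since $\Delta(n)^{\pm}$ is a $!$-extension it satisfies the support condition, so $\tau^{\geq 0}\Delta(n)^{\pm}$ is perverse and there is a triangle $\tau^{\leq -1}\Delta(n)^{\pm}\rightarrow\Delta(n)^{\pm}\rightarrow\tau^{\geq 0}\Delta(n)^{\pm}$. Exactness places $\ch(\tau^{\leq -1}\Delta(n)^{\pm})$ in degrees $\leq -1$ and $\ch(\tau^{\geq 0}\Delta(n)^{\pm})$ in degrees $\geq 0$, while your computation places $\ch(\Delta(n)^{\pm})$ in degree $0$; the long exact sequence forces $\ch(\tau^{\leq -1}\Delta(n)^{\pm})=0$, and faithfulness (applied to its perverse cohomologies) gives $\tau^{\leq -1}\Delta(n)^{\pm}=0$. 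The paper runs exactly this truncation argument, in the dual form for $\nabla(n)^{\pm}$. Relatedly, your detour through Verdier duality to handle $\nabla(n)^{\pm}$ imports further unverified compatibilities (the behaviour of $\ch$ under $D$, and the sign-twist in $D\,i_{n!}\simeq i_{n*}(-)$); the alternative you mention in passing --- computing $\Gamma_{T^m}(\nabla(n)^{\pm})$ directly from the $T^m\cap\Gr^n$ half of Proposition \ref{Intersect} and applying the same truncation argument --- is cleaner and is what the paper does.
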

\begin{proof}
By base change and proposition \ref{Intersect} imply \[\ch (\Delta(n)^+) \simeq  \bigoplus_{m \in \bZ} H^*_c(S^m \cap \Gr^n)[(-m - n)/2](m) \;\; \text{and} \;\;  \ch (\nabla(n)^+) \simeq \bigoplus_{m \in \bZ} H^*(T^m \cap \Gr^n)(m),\] which matches \eqref{CharStarEq}.

There is an exact triangle \[\tau^{\leq 0} \nabla(n)^{\pm} \rightarrow \nabla(n)^{\pm} \rightarrow \tau^{\geq 1}\nabla(n)^{\pm}\] and $\tau^{\leq 0} \nabla(n)^{\pm}$ is perverse. Therefore $\ch(\tau^{\geq 1} \nabla(n)^{\pm}) \simeq 0$, because \eqref{CharStarEq} is concentrated in degree 0 and $\ch$ is exact. Since $\ch$ is faithful, $\tau^{\leq 0} \nabla(n)^{\pm} \simeq \nabla(n)^{\pm}$ is perverse.
\end{proof}

\subsection{Nadler's functor and even simples}
According to sections 5 and 6 of \cite{Nad}, nearby cycles from the complex affine Grassmannian gives a monoidal functor \beq \label{Complex}\psi: \Perv_{G_{O_{\bC}}}(\Gr_{\bC}) \rightarrow \Perv_{G_O}(\Gr), \qquad L(n)_{\bC} \mapsto L(2n)^+,\eeq exact because $G$ is quasi-split, and sending simples to simples because $G$ is split.

\begin{proposition}\label{CharEven}
The characters of even simples are
\[\ch(L(2n)^{\pm}) \simeq k^{\pm}(2n) \oplus k^{\pm}(2n-4) \oplus \cdots k^{\pm}(-2n).\]
\end{proposition}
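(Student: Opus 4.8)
The plan is to compute $\ch(L(2n)^+)$ by relating the middle extension $L(2n)^+$ to the standard and costandard objects $\Delta(2n)^+$ and $\nabla(2n)^+$, whose characters were determined in Proposition \ref{CharStar}. The key structural input is Nadler's monoidal functor $\psi$ from \eqref{Complex}, which sends the complex simple $L(n)_{\bC}$ to $L(2n)^+$ and is exact. On the complex side, the classical geometric Satake theorem identifies $\Perv_{G_{O_{\bC}}}(\Gr_{\bC})$ with $\Rep(\SL(2))$, and the simple $L(n)_{\bC}$ corresponds to the irreducible representation of highest weight $n$, whose character (via the complex analogue of the weight/character functor, i.e. Theorem 3.6 of \cite{MV}) is $k(n) \oplus k(n-2) \oplus \cdots \oplus k(-n)$ in the appropriate grading. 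The idea is that $\ch \circ \psi$ should be computable from this: semi-infinite cohomology is compatible with nearby cycles, so $\ch(\psi(\cF_{\bC}))$ is obtained from the MV weight functor applied to $\cF_{\bC}$ by doubling the grading, remembering only the parity of each weight as the $\bZ/2$-character, and noting that on even orbits the monodromy of the local system governing $\Gamma_{T^m}$ is trivial (contributing $k^+$). This immediately yields $\ch(L(2n)^+) \simeq k^+(2n) \oplus k^+(2n-4) \oplus \cdots \oplus k^+(-2n)$, and the sign case follows by tensoring with $k^-$ as in Proposition \ref{CharStar}.

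An alternative, more self-contained route avoids invoking compatibility of $\psi$ with $\ch$ directly. Since $L(2n)^+$ is the image of $\Delta(2n)^+ \to \nabla(2n)^+$, and $\ch$ is exact and faithful, $\ch(L(2n)^+)$ is a subquotient of $\ch(\Delta(2n)^+)$. By induction on $n$, one knows the characters of all $L(2m)^+$ and $L(2m)^-$ for $m < n$; combining this with the Jordan--Hölder decomposition of $\Delta(2n)^+$ in the highest weight category $\Perv_{G_O}(\Gr^{\even})$, one can solve for $\ch(L(2n)^+)$. Concretely, $\Delta(2n)^+$ has $L(2n)^+$ at the top, and its other composition factors are among $L(2m)^{\pm}$ for $m<n$; matching the known total character $k^+(2n) \oplus (k^+\oplus k^-)(2n-2) \oplus \cdots$ from \eqref{CharStarEq} against the sum of the characters of the factors forces the multiplicities. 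The even-versus-odd weight bookkeeping works out so that the $k^+$ part in even weights of $\Delta(2n)^+$ is exactly $\ch(L(2n)^+)$ plus contributions from lower $L(2m)^+$'s, and the $(k^+\oplus k^-)$ parts in odd weights come entirely from the $L(\cdot)^-$ factors; a short induction then isolates the claimed formula.

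The main obstacle is justifying the first approach rigorously, i.e. establishing that $\ch \circ \psi$ is the "parity of the MV weight functor with doubled grading." This requires knowing that Nadler's nearby cycles functor $\psi$ intertwines the complex semi-infinite/weight functor with the real character functor $\ch$ in the expected way — a compatibility that is natural but needs a genuine argument tracing through the definitions in \cite{Nad} of both functors via the same semi-infinite orbits $S^m$, $T^m$. If that compatibility is available (or easy to extract from \cite{Nad}, sections 5, 6, and 8.5), the proof is short. If not, the inductive bootstrap from $\ch(\Delta(2n)^+)$ is the safer path, and there the only subtlety is confirming that no $L(2m)^+$ with $m<n$ other than those forced by the weight count appears — which follows from the faithfulness of $\ch$ together with the fact (from \eqref{Complex}) that $L(2n)^+$ has no odd composition factors, so its character lives entirely in the $k^+$-isotypic, even-weight part.
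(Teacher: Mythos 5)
Your first approach is exactly the paper's proof: the compatibility you were worried about is precisely Theorem 8.8.1 of \cite{Nad}, which says that $\psi$ preserves characters while doubling the weights, and the triviality of the $\pi_0(T)=\bZ/2$-action on $\ch(L(2n)^+)$ follows from the $T$-equivariance of Nadler's specialization diagram. So the short route goes through, and no further argument is needed.

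Be warned, however, that your ``safer'' fallback does not actually close. Writing $\ch(\Delta(2n)^+) = \ch(L(2n)^+) + \sum_{m<n}\bigl(a_m\,\ch(L(2m)^+) + b_m\,\ch(L(2m)^-)\bigr)$ with unknown multiplicities $a_m, b_m$ and unknown $\ch(L(2n)^+)$ is underdetermined: already for $n=1$, the weight-$0$ part $(k^+\oplus k^-)(0)$ of $\ch(\Delta(2)^+)$ could a priori be split in any proportion between $\ch(L(2)^+)$ and copies of $L(0)^{\pm}$, and nothing in your list of constraints rules out $k^+(0)$ appearing in $\ch(L(2)^+)$ (even granting that the character is $k^+$-isotypic, which itself requires the equivariance argument above). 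Also, all weights occurring in $\ch(\Delta(2n)^+)$ are even, so the ``even versus odd weight'' bookkeeping you describe is not the relevant dichotomy; the point is which residue mod $4$ a weight has, and that information is not visible from $\ch(\Delta(2n)^+)$ alone. Stick with the first argument.
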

\begin{proof}
Theorem 8.8.1 of \cite{Nad} says that $\psi$ preserves characters but doubles the weights. Moreover the component group $\pi_0(T) = \bZ/2$ acts trivially on $\ch(L(2n)^+)$ because the specialization diagram on page 42 of \cite{Nad} is $T$-equivariant.
\end{proof}

By monoidality of Nadler's functor and the Clebsch-Gordan rule, \beq \label{ClebschGordan} L(2n)^+* L(2m)^+ \simeq L(2(n + m))^+ \oplus L(2(n + m) - 4)^+ \oplus \cdots L(2|n - m|)^+.\eeq


\section{Convolution}
Here prove that the real character functor is monoidal. The proof of proposition 6.4 of \cite{MV} fails in the real setting because the Mirkovic-Vilonen spectral sequence no longer degenerates on the first page. Instead we interpret convolution as a fusion product, then use that specialization commutes with hyperbolic localization.

\subsection{Convolution}
If $F, F' \in \Perv_{G_O}(\Gr)$, define $F * F' \coloneqq m_! (F \widetilde{\boxtimes} F')$. Here \beq \label{Conv} m: \Gr \widetilde{\times} \Gr \coloneqq G_K \times_{G_O} \Gr \rightarrow \Gr\eeq is the convolution map, and $F \widetilde{\boxtimes} F'$ is defined as in equation (4.2) of \cite{MV}.
Section 3.8 of \cite{Nad} explains that $m$ is semismall, hence convolution is exact. The abelian categories $\Perv_{G_O}(\Gr)$ and $\Perv_{G_O}(\Gr^{\even})^0$ (but not $\Perv_{G_O}(\Gr)^0$) are preserved by convolution.

\subsection{Specialization and hyperbolic localization}Here we prove that specialization commutes with hyperbolic localization, a real analytic version of \cite{Ric}.

Let $\bR^{\times}$ act on a real analytic space $Y$. Let $f: Y \rightarrow \bR$ be an $\bR^{\times}$-invariant function, $j: Y^{>0} \hookrightarrow Y$ be the inclusion of the locus $f > 0$, and $i: Y^0 \hookrightarrow Y$ be the inclusion of the locus $f = 0$. If $F$ is a sheaf on $Y^{>0}$, define its specialization to $Y^0$ by the formula $\nu F \coloneqq i^*j_*F$.

Let $M$ be a connected component of the fixed locus. 
Let $Y \xhookleftarrow{s} S \xrightarrow{p} M$ be the inclusion of the attracting locus and retraction to the fixed points. Let $s_0$, $p_0$ be the restrictions to the locus $f=0$.
Let $Y \xhookleftarrow{t} T \xrightarrow{q} M$ be the inclusion of the repelling locus and retraction to the fixed points.

In the following lemma, Braden's theorem allows us to commute hyperbolic localization past all other sheaf functors.

\begin{lemma}\label{Hyp}
Let $F$ be an $\bR^{\times}$-constructible sheaf on $Y^{>0}$. Then $(p_0)_!(s_0)^*\nu F \simeq \nu p_! s^* F$.
\end{lemma}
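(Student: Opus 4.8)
The plan is to prove Lemma \ref{Hyp} by reducing it to a statement to which Braden's hyperbolic localization theorem applies directly. The key conceptual point is that $\nu F = i^* j_* F$ is itself built from standard sheaf operations ($i^*$ and $j_*$ with respect to the function $f$), so once we set up the right geometry, commuting $(p_0)_! (s_0)^*$ past $\nu$ should reduce to commuting hyperbolic localization past $i^*$ and $j_*$, each of which is handled by base change or by Braden's theorem.

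\begin{proof}[Proof sketch]
First I would enlarge the group acting: let $\bR^\times \times \bR^\times$ act on $Y$, where the first factor is the given action (so that $f$ is invariant under it and the attracting/repelling loci $S, T$ are defined relative to it) and the second factor acts through some cocharacter whose attracting locus along $f$ recovers the situation $j : Y^{>0} \hookrightarrow Y$, $i : Y^0 \hookrightarrow Y$. More precisely, since $f$ is invariant under the first $\bR^\times$, the loci $Y^{>0}$ and $Y^0$ are unions of first-$\bR^\times$-orbits, and $\nu = i^* j_*$ is the (real-analytic) nearby-cycles-type specialization for the function $f$. The retractions $p_0, q_0$ and inclusions $s_0, t_0$ are the attracting-locus data for the first $\bR^\times$ acting on $Y^0$. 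So the claim $(p_0)_! (s_0)^* \nu F \simeq \nu p_! s^* F$ is exactly the assertion that the two commuting $\bR^\times$-actions have "compatible" hyperbolic localizations.

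The main step is then the following. Braden's theorem (in its real-analytic incarnation, which holds for $\bR^\times$-constructible sheaves by the same proof as in the algebraic case, cf. \cite{Ric}) gives a canonical isomorphism between the $!$-pushforward from the attracting locus and the $*$-pushforward from the repelling locus, each composed with the appropriate restriction. I would use this to replace $(p_0)_! (s_0)^*$ by $(q_0)_* (t_0)^!$ and $p_! s^*$ by $q_* t^!$, so that the desired identity becomes $(q_0)_* (t_0)^! \nu F \simeq \nu q_* t^! F$, i.e. $(q_0)_* (t_0)^! i^* j_* F \simeq i^* j_* q_* t^! F$. Now everything on the right of $\nu$ is a $*$- or $!$-operation along the repelling direction of the \emph{first} $\bR^\times$, while $i^*$ and $j_*$ are along the \emph{second} $\bR^\times$. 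The inclusions $t : T \hookrightarrow Y$ and $j : Y^{>0} \hookrightarrow Y$ (resp. $i : Y^0 \hookrightarrow Y$) fit into a Cartesian square because $T$ is a union of first-$\bR^\times$-orbits and hence meets $Y^{>0}$ and $Y^0$ transversally in the relevant sense; likewise $q : T \to M$ is compatible with restriction to the $f = 0$ locus. So the identity follows from proper base change (for $q_*$ against $i^*$), smooth/clean base change (for $t^!$ against $j_*$), and the observation that $T \cap Y^0 = T^0$ with $q|_{T^0} = q_0$. Dually one can run the argument on the attracting side without invoking Braden at all, using that $s^* j_* \simeq j_{0*} s_0^*$ and $p_! i^* \simeq i^* p_!$ by base change.

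\emph{The hard part} is making the base-change steps legitimate in the real-analytic, $\bR^\times$-constructible setting: one must know that $S$, $T$, $Y^{>0}$, $Y^0$ are genuine real-analytic subspaces whose intersections are again real-analytic and that the constructibility hypothesis is preserved under $j_*$ (so that $\nu F$ is again $\bR^\times$-constructible and the functors in play are well-behaved). I would handle this by citing the real-analytic constructibility package and by noting that, because $f$ is $\bR^\times$-invariant, the contraction $S \to M$ restricts to a contraction $S^0 \to M$ over $Y^0$ and the square
\[
\begin{tikzcd}
S^0 \ar[r, "s_0"] \ar[d, "p_0"'] & Y^0 \ar[d] \\
M \ar[r] & \pt
\end{tikzcd}
\]
is Cartesian over the inclusion $Y^0 \hookrightarrow Y$ and over $Y^{>0} \hookrightarrow Y$ simultaneously, which is precisely what lets the two localizations slide past one another. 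The verification that Braden's theorem applies verbatim — i.e. that the attracting/repelling decomposition of $Y^0$ under the first $\bR^\times$ is of the type to which \cite{Ric} applies — is routine given that $Y^0$ is itself $\bR^\times$-stable, so I would state it and refer to the cited real-analytic version.
\end{proof}
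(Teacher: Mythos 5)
Your overall shape---Braden's theorem plus base change---is the right one, and it matches the paper's strategy in spirit, but the way you combine the two ingredients has a genuine gap. You first apply Braden to \emph{both} sides, reducing to $(q_0)_*(t_0)^!\, i^* j_* F \simeq i^* j_*\, q_* t^! F$, and then claim this follows from base change, citing ``proper base change for $q_*$ against $i^*$.'' But $q\colon T\to M$ is not proper (already for $\bR^\times$ scaling $\bR^1$ the retraction of the repelling locus to the fixed point is the non-proper map $\bR^1\to \pt$), so $i^*q_*\not\simeq (q_0)_* i_T^*$ in general; likewise $(t_0)^! i^*\not\simeq i_T^* t^!$. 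The functor $q_*t^!$ commutes (by the two universally valid base changes, $g^*f_!\simeq f'_!g'^*$ and $g^!f_*\simeq f'_*g'^!$) with $i^!$ and with $j_*$, whereas $p_!s^*$ commutes with $i^*$ and with $j_!$ --- and you need the $i^*$ compatibility on the attracting side, not the repelling side. Your closing remark that one can ``run the argument on the attracting side without invoking Braden at all, using $s^*j_*\simeq j_{0*}s_0^*$'' is also false: $s$ is a locally closed embedding, not a smooth map, so $s^*$ does not commute with $j_*$; if it did, the lemma would be a triviality and Braden's theorem would never be needed. The auxiliary $\bR^\times\times\bR^\times$ action is likewise unnecessary ($f$ is just an invariant function; $i^*j_*$ need not be a hyperbolic localization for a second torus).

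The repair is to apply Braden \emph{in the middle} rather than at the outset, which is what the paper does: write $(p_0)_!(s_0)^*\nu F=(p_0)_!(s_0)^*i^*j_*F$, commute the attracting-locus functor past $i^*$ using $(s_0)^*i^*=i_S^*s^*$ and proper base change $(p_0)_!\,i_S^*\simeq (i|_M)^*p_!$, obtaining $(i|_M)^*p_!s^*(j_*F)$; apply Braden on $Y$ to the sheaf $j_*F$ to replace $p_!s^*$ by $q_*t^!$; commute the repelling-locus functor past $j_*$ using $t^!j_*\simeq (j|_T)_*t^!$ and $q_*(j|_T)_*\simeq (j|_M)_*q_*$, obtaining $(i|_M)^*(j|_M)_*q_*t^!F=\nu\, q_*t^!F$; and finally apply Braden once more on $Y^{>0}$ to return to $\nu\, p_!s^*F$. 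Each base change used there is one of the two that hold unconditionally, and Braden is used exactly where the variances would otherwise clash. Your concern about $\bR^\times$-constructibility being preserved under $j_*$ is legitimate and worth a sentence, but it is not where the argument lives or dies.
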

\begin{proof}
Braden's theorem \cite{Bra} says $p_!s^* \simeq  q_*t^!$ for $\bR^{\times}$-constructible sheaves. Therefore by base change \[(p_0)_!(s_0)^*\nu F  \simeq (p_0)_!(s_0)^* i^* j_* F  \simeq (i|_M)^*p_!s^* j_*F  \simeq (i|_M)^*q_*t^! j_*F \simeq (i|_M)^*(j|_M)_*  q_* t^! F  \simeq \nu p_! s^* F.\]
\end{proof}

\subsection{Fusion}
Here we interpret convolution as a fusion product. The ingredients are smoothness of \eqref{GlobalConvOrbits} and commutation of specialization with proper pushforward.

First we define three moduli spaces mapping to $\bR$, with the following special and generic fibers.

\begin{center}
\begin{tabular}{l l l} 
Moduli space & Special fiber & Generic fiber\\
$\Gr_{\bR}$ & $\Gr$ & $\Gr$ \\
$\Gr_{0 \times \bR}$ & $\Gr$ & $\Gr \times \Gr$ \\
$\Gr \widetilde{\times} \Gr_{\bR}$ & $\Gr \widetilde{\times} \Gr$ & $\Gr \times \Gr$ \\
\end{tabular}
\end{center}

\begin{enumerate}
\item Let $\Gr_{\bR}$ parameterize: a point $x \in \bR$, a $G$-bundle $E$ on $\bR$, a trivialization of $E|_{\bR - x}$.

\item Let $\Gr_{0 \times \bR}$ parameterize: a point $x \in \bR$, a $G$-bundle $E$ on $\bR$, a trivialization of $E|_{\bR - \{0, x\}}$.

\item Let $\Gr \widetilde{\times} \Gr_{\bR}$ parameterize: a point $x \in \bR$, $G$-bundles $E, E'$ on $\bR$, a trivialization of $E'|_{\bR - 0}$, an identification $E|_{\bR - x} \simeq E'|_{\bR - 0}$.
\end{enumerate}

Forgetting $E'$ gives the global convolution map $m: \Gr \widetilde{\times} \Gr_{\bR} \rightarrow \Gr_{0 \times \bR}$ whose restriction to $x = 0$ recovers \eqref{Conv}.

Choosing a global coordinate gives an identification $\Gr_{\bR} \simeq \Gr \times \bR$. For $F \in \Perv_{G_O}(\Gr)$, let $F_{\bR}$ be its pullback along the projection $\Gr_{\bR} \rightarrow \Gr$. 

For $F, F' \in \Perv_{G_O}(\Gr)$, let $F \widetilde{\boxtimes} F'_{\bR}$ on $\Gr \widetilde{\times} \Gr_{\bR}$ be defined as in equation (5.6) of \cite{MV}.

\begin{proposition}\label{Fusion}
If $F, F' \in \Perv_{G_O}(\Gr)$ then $F*F' \simeq \nu (F \boxtimes F'_{\bR^{>0}})$.
\end{proposition}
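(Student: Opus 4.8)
The plan is to realize both sides of the asserted isomorphism as degenerations of the single sheaf $F \widetilde{\boxtimes} F'_{\bR}$ on $\Gr \widetilde{\times} \Gr_{\bR}$: pushing forward along the global convolution map $m$ and then specializing to $x = 0$ should produce $F * F'$, while specializing first and then pushing forward should produce $\nu(F \boxtimes F'_{\bR^{>0}})$, and the two orders of operations will agree because $m$ is proper. Concretely, writing $j$, $i$ for the inclusions of $\{x > 0\}$ and $\{x = 0\}$ into $\Gr_{0 \times \bR}$, writing $j'$, $i'$ for the corresponding inclusions into $\Gr \widetilde{\times} \Gr_{\bR}$, and writing $m^{>0}$, $m^0$ for the restrictions of $m$ to those loci, I want to run the chain
\[ \nu(F \boxtimes F'_{\bR^{>0}}) \;\simeq\; i^* j_*\, m^{>0}_!\big(F \widetilde{\boxtimes} F'_{\bR}|_{x > 0}\big) \;\simeq\; m^0_!\, (i')^* j'_*\big(F \widetilde{\boxtimes} F'_{\bR}|_{x > 0}\big) \;\simeq\; m^0_!\big(F \widetilde{\boxtimes} F'\big) \;=\; F * F', \]
where $m^0$ is the convolution map \eqref{Conv}. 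The three isomorphisms correspond to the three steps below.

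First I would treat the generic fiber. Over $x \neq 0$ the modification points $0$ and $x$ are disjoint, so forgetting the bundle $E'$ gives an isomorphism $\Gr \widetilde{\times} \Gr_{\bR}|_{x > 0} \xrightarrow{\ \sim\ } \Gr_{0 \times \bR}|_{x > 0} \simeq \Gr \times \Gr \times \bR^{>0}$, under which the twisted product $F \widetilde{\boxtimes} F'_{\bR}$ goes to the external product $F \boxtimes F'_{\bR^{>0}}$; this is the standard factorization property of twisted products from sections 4 and 5 of \cite{MV}. This yields the first isomorphism in the chain, and in particular identifies the right-hand side of the proposition with $\nu$ of an $m$-pushforward.

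Next I would invoke that $m$ is proper on the (bounded) support of $F \widetilde{\boxtimes} F'_{\bR}$ — fiberwise it is the semismall convolution map of section 3.8 of \cite{Nad} — in order to commute the specialization $\nu = i^* j_*$ past $m_!$. Here $m_! j'_* \simeq j_* m^{>0}_!$ (compose $*$-pushforwards and use $m_! \simeq m_*$, together with $m^{>0}_! \simeq m^{>0}_*$) and $i^* m_! \simeq m^0_!(i')^*$ (proper base change), which together give the second isomorphism; this is the promised commutation of specialization with proper pushforward.

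Finally I would show that the twisted product does not degenerate as $x \to 0$, i.e. that $(i')^* j'_*\big(F \widetilde{\boxtimes} F'_{\bR}|_{x>0}\big) \simeq F \widetilde{\boxtimes} F'$. This is where smoothness of the global convolution orbits \eqref{GlobalConvOrbits} enters: near $x = 0$ the orbit stratification of $\Gr \widetilde{\times} \Gr_{\bR}$ is a locally trivial family over $\bR$ refining the orbit stratification of the special fiber $\Gr \widetilde{\times} \Gr$, and $F \widetilde{\boxtimes} F'_{\bR}$ restricts to a shifted local system on each orbit, so $(i')^* j'_*$ can be computed stratum by stratum and acts simply as restriction to the special fiber. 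The routine parts are the first two steps; the main obstacle is this last one — checking that no deeper stratum contributes to the specialization, i.e. that the degeneration from the twisted to the untwisted product is genuinely equisingular along $x = 0$ — which is precisely the role played by the smoothness of the global convolution orbits, and which one must arrange to be compatible with, not merely transverse to, the hypersurface $x = 0$.
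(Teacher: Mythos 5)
Your proposal is correct and follows essentially the same route as the paper's proof: identify $F \boxtimes F'_{\bR^{>0}}$ with $m_*(F \widetilde{\boxtimes} F'_{\bR^{>0}})$ via the isomorphism of the global convolution map over $x > 0$, commute specialization with the proper pushforward $m_*$, and use smoothness of the global convolution orbits \eqref{GlobalConvOrbits} over $\bR$ to conclude that specialization of the twisted product is just restriction to $x = 0$. Your version merely unpacks the commutation of $\nu$ with $m_*$ into its constituent base-change isomorphisms, which the paper states in one line.
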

\begin{proof}
The sheaf $F \widetilde{\boxtimes} F'_{\bR}$ on $\Gr \widetilde{\times} \Gr_{\bR}$ is constructible with respect to the stratification by spherical orbits $\Gr^n \widetilde{\times} \Gr_{\bR}^m$. By smoothness of \beq \label{GlobalConvOrbits} \Gr^n \widetilde{\times} \Gr_{\bR}^m \rightarrow \bR,\eeq specialization equals restriction \[\nu (F \widetilde{\boxtimes} F'_{\bR^{>0}}) \simeq (F \widetilde{\boxtimes} F'_{\bR})|_{x = 0} \simeq F \widetilde{\boxtimes} F'.\] 

The global convolution map restricts to an isomorphism on the $x > 0$ locus \[m: \Gr \widetilde{\times} \Gr_{\bR^{>0}} \xrightarrow{\sim} \Gr_{0 \times \bR^{>0}}.\] Moreover specialization commutes with proper pushforward, hence \[\nu(F \boxtimes F'_{\bR^{>0}}) \simeq \nu (m_* (F \widetilde{\boxtimes} F'_{\bR^{>0}})) \simeq m_* \nu(F \widetilde{\boxtimes} F'_{\bR^{>0}}) \simeq m_*(F \widetilde{\boxtimes} F') \simeq F*F'.\]
\end{proof}

\subsection{Monoidality of the character functor}
Theorem 8.9.2 of \cite{Nad} says that $\ch$ is monoidal on the essential image of \eqref{Complex}. Here we prove the stronger result that $\ch$ is monoidal on the entire category $\Perv_{G_O}(\Gr)$.

As in the proof of proposition 6.4 of \cite{MV}, consider the global semi-infinite orbits \beq \label{GlobalCT} \Gr_{0 \times \bR} \xhookleftarrow{s} (\Gr_B)_{0 \times \bR} \xrightarrow{p} (\Gr_T)_{0 \times \bR} \quad \text{and} \quad \Gr_{0 \times \bR} \xhookleftarrow{t} (\Gr_{B^-})_{0 \times \bR} \xrightarrow{q} (\Gr_T)_{0 \times \bR}.\eeq

\begin{proposition}
If $F, F' \in \Perv_{G_O}(\Gr)$ then \beq \label{CharMon}\ch (F * F') \simeq (\ch F) \otimes (\ch F').\eeq
\end{proposition}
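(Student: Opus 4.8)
The plan is to exhibit $\ch$, spread out over the coordinate line $\bR$, as the hyperbolic localization functor for the torus $T$, and then to push the commutation of specialization with hyperbolic localization (Lemma~\ref{Hyp}) through the fusion description of convolution (Proposition~\ref{Fusion}). This is the real-analytic counterpart of the argument for proposition~6.4 of \cite{MV}, with Nadler's specialization $\nu = i^*j_*$ in the role that nearby cycles play there.

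First I would globalize the semi-infinite diagrams \eqref{GlobalCT}. Take $Y \coloneqq \Gr_{0\times\bR}$ with its function $f = x$ and the action of $\bR^\times \subset T(\bR)$, which fixes $x$; its fixed, attracting, and repelling loci are $(\Gr_T)_{0\times\bR}$, $(\Gr_B)_{0\times\bR}$, and $(\Gr_{B^-})_{0\times\bR}$. By Braden's theorem $p_!s^* \simeq q_*t^!$ as a complex on $(\Gr_T)_{0\times\bR}$; twisting the component over the coweight $m$ by $[m/2](-m)$ and summing, call the result $\ch_{0\times\bR}$. Two fiberwise identifications are needed. Over $x=0$, $(\Gr_T)_{0\times\bR}|_{x=0} = \bigsqcup_m\{t^m\}$ and $(\Gr_B)_{0\times\bR}|_{x=0} = \bigsqcup_m S^m$, so $\ch_{0\times\bR}(H)|_{x=0} = \ch(H|_{x=0})$. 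Over $x>0$, the factorization identification $(\Gr_T)_{0\times\bR}|_{x>0} \simeq \Gr_T\times\Gr_T\times\bR^{>0}$ carries the global semi-infinite orbits to products $S^{m_1}\times S^{m_2}$, so the Künneth formula gives $\ch_{0\times\bR}(F\boxtimes F'_{\bR^{>0}})|_{x>0} \simeq (\ch F)\boxtimes(\ch F')\boxtimes k_{\bR^{>0}}$; the shifts match because weights on each connected component of $\Gr$ have a fixed parity, so $[m_1/2]+[m_2/2] = [(m_1+m_2)/2]$. The essential geometric input is that the sheet of $(\Gr_T)_{0\times\bR}$ lying over a pair $(m_1,m_2)$ limits, as $x\to 0$, to the point $t^{m_1+m_2}$: on $\pi_0$ of the fixed-point family the specialization map $\bZ^2\to\bZ$ is addition, and $\pi_0(T)$ acts diagonally. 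This is precisely the additivity of weights and the tensor-product rule in $\Rep(\bZ/2)_{\bZ}$.

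Finally I would assemble the pieces. Write $\cH \coloneqq F\boxtimes F'_{\bR^{>0}}$; it is $\bR^\times$-constructible, being constructible along the ($\bR^\times$-stable) semi-infinite stratification, and $\nu\cH \simeq F*F'$ by Proposition~\ref{Fusion}. Applying Lemma~\ref{Hyp} on each connected component of $(\Gr_T)_{0\times\bR}$ (with the twist $[m/2](-m)$) identifies $\ch(F*F') = \ch(\nu\cH)$ with the specialization inside $(\Gr_T)_{0\times\bR}$ of the complex that over $x>0$ equals $(\ch F)\boxtimes(\ch F')\boxtimes k_{\bR^{>0}}$. Since $j_*k_{\bR^{>0}}$ has stalk $k$ at the endpoint $0$ while the sheet over $(m_1,m_2)$ converges to $t^{m_1+m_2}$, that specialization is $\bigoplus_m\bigoplus_{m_1+m_2=m}(\ch F)_{m_1}\otimes(\ch F')_{m_2}$ with the diagonal $\bZ/2$-action, which is exactly $(\ch F)\otimes(\ch F')$ in $\Rep(\bZ/2)_{\bZ}$. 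The main obstacle will be setting up the global loci $(\Gr_T)_{0\times\bR}$, $(\Gr_B)_{0\times\bR}$, $(\Gr_{B^-})_{0\times\bR}$ with enough care to know they really are the $\bR^\times$-fixed, attracting, and repelling loci of $\Gr_{0\times\bR}$ — so that Braden's theorem and Lemma~\ref{Hyp} apply — to identify their special and generic fibers, and above all to pin down that $\pi_0$ of the fixed-point family specializes by addition of coweights; the remaining steps (Künneth, the shift bookkeeping, and threading the two lemmas together) are routine.
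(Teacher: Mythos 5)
Your proposal is correct and follows essentially the same route as the paper: combine the fusion description of convolution (Proposition~\ref{Fusion}) with the commutation of specialization and hyperbolic localization (Lemma~\ref{Hyp}) applied to the global semi-infinite diagrams \eqref{GlobalCT}, then identify the generic fiber via K\"unneth and read off the tensor product from the addition-of-coweights behavior of the fixed-point components as $x \to 0$. The paper's proof is terser — it compresses your K\"unneth and $\pi_0$-specialization steps into ``taking stalks at each connected component'' — but the content is identical, including the observation that $T$-equivariance of \eqref{GlobalCT} upgrades the isomorphism to one of $\bZ/2$-representations.
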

\begin{proof}
Proposition \ref{Fusion} and lemma \ref{Hyp} imply $(p_0)_! (s_0)^* (F * F') \simeq \nu p_! s^* (F \boxtimes F'_{\bR^{>0}})$, an isomorphism of sheaves on $\Gr_T = \bZ$. Taking stalks at each connected component gives \eqref{CharMon}, an isomorphism of $\bZ/2$-representations because \eqref{GlobalCT} is $T$-equivariant.
\end{proof}

\subsection{Duality}
Let $H$ be a group ind-proper ind-real analytic space. Let $a: H \rightarrow H$ be the inversion map $a(h) = h^{-1}$. Convolution is defined $F * F' \coloneqq m_!(p_1^* F \otimes p_2^* F)$, where $p_1, p_2, m: H \times H \rightarrow H$ are the two projections and the multiplication map. Let $D$ denote Verdier duality.

\begin{proposition}\label{GroupDuality}
If $F \in \Perv(H)$, there is an adjunction \[\Hom(F*-, -) \simeq \Hom(-, (Da^*F)*-).\]
\end{proposition}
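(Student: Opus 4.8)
The plan is to prove this as an instance of the general yoga that convolution with a fixed object has a biadjoint given by convolution with a suitably dualized object, exactly as in the classical case (cf. the adjunctions between convolution functors in the geometric Satake story). Concretely, the functor $F * (-)$ on $\Perv(H)$ — or really on the ambient derived category — should have a left adjoint, and I claim that left adjoint is $(Da^*F) * (-)$.

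First I would unwind both sides into honest sheaf-functor statements on $H \times H$ and $H$. For fixed $G, G' \in \Perv(H)$ we have $\Hom(F * G, G') = \Hom(m_!(p_1^* F \otimes p_2^* G), G')$. By the $(m_!, m^!)$ adjunction this is $\Hom(p_1^* F \otimes p_2^* G, m^! G')$, and then moving the tensor factor $p_1^* F$ across via the internal-hom adjunction on $H \times H$ gives $\Hom(p_2^* G, \mathcal{H}om(p_1^* F, m^! G'))$. Since $p_2$ is ind-proper (as $H$ is ind-proper), $(p_2)_!$ is left adjoint to $p_2^!$ and one can rewrite this as $\Hom\bigl(G, (p_2)_!\,\mathcal{H}om(p_1^* F, m^! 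G')\bigr)$, modulo matching $(p_2)_!$ with $(p_2)_*$ using properness and the compatibility $p_2^! = p_2^*[\dim]$ on smooth-enough pieces (or more cleanly, just keeping track of the canonical isomorphism $(p_2)_! \simeq (p_2)_*$). So the real content is the identification
\[
(p_2)_*\,\mathcal{H}om(p_1^* F, m^! G') \;\simeq\; (Da^* F) * G'.
\]

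To see this, I would expand the right-hand side: $(Da^*F) * G' = m_!\bigl(p_1^*(Da^*F) \otimes p_2^* G'\bigr)$. Using $p_1^* D \simeq D p_1^!$ (Verdier duality commutes with pullback up to the usual twist, and here $p_1$ is a fibration so $p_1^! \simeq p_1^*$ up to a shift by the dimension of $H$ which is ind-proper but the shift is locally constant, so absorbable), and the standard identity $D(A) \otimes B \simeq \mathcal{H}om(A, D D B) = \mathcal{H}om(A, B)$ after a further $D$, one rewrites the integrand as an internal hom; then using the symmetry of $H \times H$ under swapping factors together with the inversion $a$ to re-index, the convolution $m_!$ of that internal hom becomes $(p_2)_*$ of an internal hom involving $m^!$ rather than $m_!$. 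This is the classical manipulation identifying $D(A * B) \simeq (Da^*A) \,\underline{*}\, (DB)$ with the "dual convolution" $\underline{*} = m_*$, combined with the interplay between $D$, $a$, and the kernel picture: convolution by $F$ is the integral transform with kernel supported on $\{(h, h') : \ldots\}$ pushed to the diagonal action, and its adjoint is the transform with the transposed-and-dualized kernel, which is precisely $Da^*F$ because the transpose of the multiplication correspondence is implemented by inversion.

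I expect the main obstacle to be bookkeeping rather than conceptual: (1) keeping the shifts/twists straight when passing between $p_i^!$ and $p_i^*$ and between $m_!$ and $m_*$, since $H$ is only ind-proper ind-real-analytic and the relevant dimension shifts are locally constant functions on $\pi_0(H)$, so one should phrase everything $\mathbb{Z}$-component-wise or note that the twists cancel in the final pairing; and (2) checking that all the base-change and projection-formula identities used are valid in the $\bR^\times$-constructible / ind-constructible setting on ind-real-analytic spaces — but these are the same formal properties invoked already in Lemma \ref{Hyp} and Proposition \ref{Fusion}, so I would simply cite that formalism. A secondary subtlety is verifying $Da^*F \in \Perv(H)$ (so the statement is internal to $\Perv$), which follows from $a$ being an isomorphism, hence $a^*$ and $D$ both preserving perversity. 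Once the kernel-transpose identity $(p_2)_* \mathcal{H}om(p_1^* F, m^! G') \simeq (Da^* F) * G'$ is in hand, the adjunction is immediate.
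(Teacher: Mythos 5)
Your skeleton is the same as the paper's: apply the $(m_!,m^!)$ adjunction, reduce to identifying a "transposed, dualized kernel" transform with convolution by $Da^*F$, and finish with the adjunction for $p_2$ plus $(p_2)_!\simeq (p_2)_*$ (note, by the way, that the adjunction you actually need there is $p_2^*\dashv (p_2)_*$, not $(p_2)_!\dashv p_2^!$). The reduction you arrive at, namely
\[
(p_2)_*\,\mathcal{H}om(p_1^*F, m^!G') \;\simeq\; (Da^*F)*G',
\]
is correct. The gap is that your derivation of this identity is not a proof, and two of the identities you lean on are false or unavailable in this setting. First, $\mathcal{H}om(A,B)\simeq DA\otimes B$ does not hold for general constructible complexes; what is true (and is what the paper cites as Proposition 2.9.4 of \cite{Ach}) is the \emph{external} version $\mathcal{H}om(p_1^*A, p_2^!C)\simeq p_1^*(DA)\otimes p_2^*C$, i.e.\ $\mathcal{H}om(A\boxtimes k, \,k\overset{!}{\boxtimes} C)\simeq (DA)\boxtimes C$. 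Second, you invoke $p_1^!\simeq p_1^*[\dim H]$ because "$p_1$ is a fibration"; that requires the fiber $H$ to be smooth, and here $H$ is the real affine Grassmannian realized as a polynomial loop group $\Omega$ — an ind-real-analytic space whose strata closures are singular — so $\omega_H$ is not a shift of the constant sheaf and this step does not go through. Neither identity is needed if one uses the external form above.

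The missing ingredient that makes the identity a one-line computation is the change of coordinates $\phi:H\times H\to H\times H$, $(h_1,h_2)\mapsto (h_1^{-1},h_1h_2)$, which the paper uses: it satisfies $p_1\phi=a p_1$, $p_2\phi=m$, $m\phi=p_2$, so it literally transposes the multiplication correspondence via inversion (the heuristic you state but do not implement). Applying $\phi^*=\phi^!$ turns $\Hom(p_1^*F\otimes p_2^*G,\,m^!G')$ into $\Hom(p_1^*a^*F\otimes m^*G,\,p_2^!G')$, after which the external duality formula and the $(m^*,m_*)$ adjunction (with $m_*\simeq m_!$ by ind-properness) give $\Hom(G,(Da^*F)*G')$ with no smoothness hypotheses and no internal-hom gymnastics. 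If you replace your "classical manipulation" paragraph with this coordinate change and the external $\mathcal{H}om$ formula, your argument becomes the paper's proof.
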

\begin{proof}
Using the change of coordinates \[H \times H \rightarrow H \times H, \qquad (h_1, h_2) \mapsto (h_1^{-1}, h_1h_2),\]  proposition 2.9.4 of \cite{Ach}, and ind-properness of $m$, we get \begin{align*}\Hom(F*-, -) &\simeq \Hom((p_1^* F) \otimes (p_2^* -), m^! -) \\ &\simeq \Hom((p_1^* a^* F) \otimes (m^* -), p_2^! -) \\ &\simeq \Hom(m^* -, (p_1^* (Da^*F)) \otimes (p_2^* -)) \\ &\simeq \Hom(-, (Da^*F) * -).\end{align*}
\end{proof}

If $F \in \Perv_{G_O}(\Gr)$, let $a^* F$ be the pullback along inversion \[a: G_K \rightarrow G_K, \qquad g \mapsto g^{-1}.\]

\begin{proposition}\label{Duality}
For $F \in \Perv_{G_O}(\Gr)$, there is an adjunction \[\Hom(F * -, -) \simeq \Hom(-, (Da^*F)*-).\]
\end{proposition}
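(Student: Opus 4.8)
The plan is to reduce to Proposition~\ref{GroupDuality}, which does not apply verbatim because $\Gr$ is not a group: inversion on $G_K$ descends neither to $\Gr = G_K/G_O$ nor to the convolution space $\Gr\,\widetilde{\times}\,\Gr$. The remedy is to work $G_O$-equivariantly throughout, i.e.\ on the Hecke ind-stack $\cH \coloneqq G_O\backslash G_K/G_O$: pullback along $G_K \to \Gr$ identifies $\Perv_{G_O}(\Gr)$ with $\Perv(\cH)$ and $*$ with convolution on $\cH$, while inversion $g \mapsto g^{-1}$ now does descend, to an anti-automorphism $\overline{a}$ of $\cH$ (it swaps left and right cosets). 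Under this dictionary $a^*F$ becomes $\overline{a}^*F$, and the proposition is the $\cH$-version of Proposition~\ref{GroupDuality}; I would prove it by transporting that proof.

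Two ingredients make the transport go through. First, the convolution stack $\cH\,\widetilde{\times}\,\cH = G_O\backslash(G_K\times_{G_O}G_K)/G_O$ carries two well-defined projections $\overline{p}_1,\overline{p}_2\colon \cH\,\widetilde{\times}\,\cH\to\cH$ together with the convolution map $m\colon[g_1,g_2]\mapsto g_1g_2$, and $F*F'\simeq m_!(\overline{p}_1^*F\otimes\overline{p}_2^*F')$ exactly as in the group case. (On $\Gr\,\widetilde{\times}\,\Gr$ itself the projection remembering the second factor is not defined, because of the twist; it becomes defined only after the leftmost $G_O$ is quotiented out, which is precisely why Proposition~\ref{GroupDuality} had to be stated for genuine groups.) Moreover $m$ is ind-proper, being the $G_O$-quotient of the semismall convolution map of section~3.8 of \cite{Nad}, so $m_!\simeq m_*$. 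Second, the change of coordinates $(g_1,g_2)\mapsto(g_1^{-1},g_1g_2)$ on $G_K\times G_K$ is equivariant for the left, twisted middle, and right copies of $G_O$ in a way that descends to an isomorphism $\overline{\psi}\colon\cH\,\widetilde{\times}\,\cH\xrightarrow{\,\sim\,}\cH\,\widetilde{\times}\,\cH$ with $\overline{p}_1\circ\overline{\psi}=\overline{a}\circ\overline{p}_1$, $\overline{p}_2\circ\overline{\psi} = m$, and $m\circ\overline{\psi}=\overline{p}_2$; this is a short direct check against the three $G_O$-actions. Granting these, the chain of isomorphisms in the proof of Proposition~\ref{GroupDuality} goes through with $H$ replaced by $\cH$: apply $m_!\dashv m^!$, then $\overline{\psi}^*\simeq\overline{\psi}^!$ together with the three identities above, then proposition 2.9.4 of \cite{Ach}, then $m^*\dashv m_*\simeq m_!$, landing on $\Hom(F * -, -)\simeq\Hom(-, (Da^*F)*-)$.

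The step I expect to be the real obstacle is the passage to $\cH$ and the equivariance bookkeeping for $\overline{\psi}$: since inversion genuinely fails to act on $\Gr$ and on $\Gr\,\widetilde{\times}\,\Gr$, one must verify carefully that it nonetheless acts on the double-coset stack $\cH\,\widetilde{\times}\,\cH$ compatibly with all of $\overline{p}_1$, $\overline{p}_2$, and $m$, and that $m_!(\overline{p}_1^*F\otimes\overline{p}_2^*F')$ really does compute $F*F'$ once the half-integer shifts and the $\pi_0(T)=\bZ/2$ local systems are carried through the equivalence $\Perv_{G_O}(\Gr)\simeq\Perv(\cH)$. Given the sign subtlety acknowledged above, this is a place to be vigilant about stray signs.
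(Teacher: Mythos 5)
Your strategy is viable and the key verifications you flag do check out (inversion descends to $\cH = G_O\backslash G_K/G_O$ and to $\cH\,\widetilde{\times}\,\cH$, and the three compatibilities of $\overline{\psi}$ with $\overline{p}_1$, $\overline{p}_2$, $m$ are correct), but it is genuinely different from the paper's proof, which takes a shortcut that makes Proposition~\ref{GroupDuality} apply \emph{verbatim} rather than by transport. The paper uses the Gram--Schmidt factorization $G_K \simeq \Omega_{\bC}\times G_O$ to identify $\Gr_{\bC}$ with the group $\Omega_{\bC}$ of based polynomial loops into the maximal compact, and hence $\Gr$ with the real subgroup $\Omega = \Omega_{\bC}\cap G_{\bR[t^{\pm1}]}$; by \cite{CN18} the convolution map \eqref{Conv} becomes literal group multiplication $\Omega\times\Omega\to\Omega$ under this homeomorphism. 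So $\Gr$ \emph{is} an ind-proper ind-real analytic group in disguise, and Proposition~\ref{GroupDuality} applies with $H=\Omega$, with no equivariant formalism needed. What your route costs, relative to this: (i) the step invoking proposition 2.9.4 of \cite{Ach} is a K\"unneth-type identity for external products on a genuine product $H\times H$, whereas on $\cH\,\widetilde{\times}\,\cH$ the object $\overline{p}_1^*F\otimes\overline{p}_2^*F'$ is only locally an external product, so you need an additional local-triviality argument for the $G_O$-torsor $G_K\to\Gr$ to justify that step; (ii) Verdier duality and the six operations on the infinite-dimensional quotient stack $\cH$ differ from those on $\Gr$ by (infinite) shifts that must be normalized, which is exactly the bookkeeping you flag as the ``real obstacle'' but do not carry out. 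What your route buys is generality: it never uses that $\Gr$ is homeomorphic to a group, only that convolution lives on a double-coset stack, so it would apply to Hecke-type categories where no analogue of the $\Omega$-model is available. Neither approach is wrong; the paper's is simply the one that reduces the proposition to a statement already proved.
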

\begin{proof}
Let $\Omega_{\bC} \subset G_{\bC[t^{\pm 1}]}$ be the group of polynomial loops $(S^1, 1) \rightarrow (K, 1)$, into the maximal compact subgroup $K \subset G_{\bC}$. Gram-Schmidt factorization $G_K \simeq \Omega_{\bC} \times G_O$ implies a homeomorphism $\Gr_{\bC} \simeq \Omega_{\bC}$. Moreover the real affine Grassmannian $\Gr \subset \Gr_{\bC}$ identifies with a real subgroup $\Omega \coloneqq \Omega_{\bC} \cap G_{\bR[t^{\pm 1}]}$. The convolution map \eqref{Conv} identifies with multiplication $m: \Omega \times \Omega \rightarrow \Omega$, see \cite{CN18}. Proposition \ref{GroupDuality} now implies the desired adjunction. 
\end{proof}

\section{The real Satake category}\label{RealSat}

\subsection{The odd simples}
The odd simples are obtained from the even simples by convolution with $L(1)^+$.

\begin{proposition}\label{Steinberg}
There is an isomorphism $L(2n+1)^{\pm} \simeq L(1)^+ * L(2n)^{\pm} \simeq L(2n)^{\pm} * L(1)^+$.
\end{proposition}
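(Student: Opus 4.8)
The plan is to use convolution with $L(1)^+$ to move between the even and odd components, controlling everything via the character functor. First I would compute $\ch(L(1)^+ * L(2n)^{\pm})$ using monoidality of $\ch$ (equation \eqref{CharMon}), together with Proposition \ref{CharEven} for $\ch(L(2n)^{\pm})$ and Proposition \ref{CharStar} for $\ch(L(1)^{\pm})$; note $L(1)^{\pm} = \Delta(1)^{\pm} = \nabla(1)^{\pm}$ since $\Gr^1 \simeq \bR$ has no boundary, so $\ch(L(1)^+) \simeq k^+(1) \oplus k^-(-1)$ (using $(k^-)^{\otimes 1} = k^-$). Tensoring, $\ch(L(1)^+ * L(2n)^{\pm}) \simeq \big(k^+(1) \oplus k^-(-1)\big) \otimes \big(k^{\pm}(2n) \oplus \cdots \oplus k^{\pm}(-2n)\big)$, which lives in odd weights $2n+1, 2n-1, \dots, -(2n+1)$, each appearing with multiplicity one and with the $\bZ/2$-sign alternating so that weight $2n+1$ carries $k^{\pm}$. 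Since $\ch$ is faithful and $2n+1$ is the top weight with nonzero character, $\Gr^{2n+1}$ is open in the support; as the local system there must be the one matching the $\bZ/2$-action in weight $2n+1$, namely $k^{\pm}$, the sheaf $L(1)^+ * L(2n)^{\pm}$ has $L(2n+1)^{\pm}$ as its unique quotient (and sub) with open support.

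Next I would upgrade this to the statement that $L(1)^+ * L(2n)^{\pm}$ is actually simple, equal to $L(2n+1)^{\pm}$. The cleanest route is to count: by exactness of convolution (semismallness of $m$, stated in the Convolution subsection) $L(1)^+ * L(2n)^{\pm}$ is perverse, and its class in the Grothendieck group is determined by $\ch$. It therefore suffices to show no other simple $L(k)^{\epsilon}$ can appear. Any such constituent has $\ch(L(k)^{\epsilon})$ supported in weights $\le k$ with top weight exactly $k$; since the total character is supported in weights $\le 2n+1$ and is multiplicity-free in each weight, induction on $n$ (peeling off the top $L(2n+1)^{\pm}$ and comparing characters) forces the complement to have zero character, hence to vanish by faithfulness. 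So $L(1)^+ * L(2n)^{\pm} \simeq L(2n+1)^{\pm}$.

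For the second isomorphism $L(2n)^{\pm} * L(1)^+ \simeq L(2n+1)^{\pm}$, I would run the identical character computation with the tensor factors swapped — $\ch$ is monoidal so the two products have the same character — and conclude the same simple by the same uniqueness argument. Alternatively one can invoke the duality adjunction of Proposition \ref{Duality}: since $L(1)^+$ is self-dual under $Da^*$ (it is $i_{1!*}$ of a rank-one local system on a point-like orbit, and $a$ preserves $\Gr^1$), applying $Da^*$ to $L(1)^+ * L(2n)^{\pm} \simeq L(2n+1)^{\pm}$ and using $Da^*(A*B) \simeq (Da^*B)*(Da^*A)$ together with self-duality of all the simples $L(k)^{\epsilon}$ gives $L(2n)^{\pm} * L(1)^+ \simeq L(2n+1)^{\pm}$ directly.

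The main obstacle is the simplicity claim: monoidality of $\ch$ plus semismallness only gives that $L(1)^+ * L(2n)^{\pm}$ is perverse with a prescribed character, and a priori a perverse sheaf with that character could be a non-split extension involving $L(2n+1)^{\pm}$ and lower simples. What rescues us is that the character \eqref{CharStarEq}-style computation here is \emph{multiplicity-free in each weight} and, crucially, the $\bZ/2$-equivariant structure in each weight is rigid; the inductive peeling argument then leaves no room for extra constituents. I would make sure the base case $n=0$ (i.e. $L(1)^+ * L(0)^{\pm} = L(1)^{\pm}$, trivial since $L(0)^{\pm}$ is the unit or the sign sheaf on $\Gr^0$) is handled, and that the sign bookkeeping — the $(k^-)^{\otimes n}$ factor and the alternation of $k^{\pm}$ along the weights — is done carefully, since this is exactly the point the acknowledgements flag as error-prone.
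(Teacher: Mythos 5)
Your character computation is correct, and so is the observation that $L(2n+1)^{\pm}$ must occur as a constituent of $L(1)^+ * L(2n)^{\pm}$ for support reasons with the right sign in top weight. But the step you yourself flag as the main obstacle --- upgrading this to simplicity --- is where the argument genuinely breaks. The ``peeling off the top $L(2n+1)^{\pm}$ and comparing characters'' move presupposes that you know $\ch(L(2n+1)^{\pm})$, and you do not: Proposition \ref{CharEven} only gives the characters of the \emph{even} simples (via Nadler's functor), and the whole point of Proposition \ref{Steinberg} is to determine the odd ones. Multiplicity-freeness of the total character does not save you. Concretely, the decomposition
\[
\ch\bigl(L(1)^+ * L(2n)^+\bigr) \;=\; \bigl(k^+(2n+1) \oplus k^-(-2n-1)\bigr) \;\oplus\; \ch\bigl(L(2n-1)^-\bigr)
\]
is consistent at the level of $\bZ/2$-graded characters (the inductively known character of $L(2n-1)^-$ is exactly the alternating string from weight $2n-1$ down to $-2n+1$), so nothing in your argument excludes the possibility that $L(1)^+ * L(2n)^+$ has Jordan--H\"older factors $L(2n+1)^+$ and $L(2n-1)^-$, with $L(2n+1)^+$ having character supported only in weights $\pm(2n+1)$. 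Your ``complement has zero character'' conclusion is circular.

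The paper closes this gap with an extra idea you are missing: assume for contradiction that $L(2n+1)^+$ has a deficient character (i.e.\ that extra factors $L(2m+1)^{\pm}$ appear), and then convolve the hypothetical $L(2n+1)^+$ with $L(1)^+$ \emph{again}. The result lives on the even component, where every simple $L(2m)^{\pm}$ has known character $k^{\pm}(2m)\oplus k^{\pm}(2m-4)\oplus\cdots\oplus k^{\pm}(-2m)$; in particular any even simple whose character is nonzero in weight $4$ is also nonzero in weight $0$, and every even simple is nonzero in weight $0$ or $\pm 2$. A deficient $\ch(L(2n+1)^+)$ has ``holes'' at low weights, so $\ch(L(2n+1)^+ * L(1)^+)$ would vanish in weight $0$ (resp.\ weights $0,\pm 2$) while being nonzero elsewhere, contradicting exactness and faithfulness of $\ch$. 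Without some version of this second convolution step (or another input pinning down $\ch(L(2n+1)^+)$), your proof is incomplete. Your treatment of the reversed product $L(2n)^{\pm} * L(1)^+$ is fine once the first isomorphism is established, since by then the character of $L(2n+1)^{\pm}$ is known to be the full alternating string; the alternative route via Proposition \ref{Duality} would additionally require justifying that $Da^*$ is a monoidal anti-equivalence, which the paper does not provide.
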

\begin{proof}
Since the character functor is monoidal, \beq \label{CharOdd} \ch(L(1)^+ * L(2n)^+) \simeq k(2n+1)^+ \oplus k(2n-1)^- \oplus k(2n-3)^+ \oplus \cdots  k(-2n-1)^-.\eeq For support reasons $L(2n+1)^+$ appears as a composition factor in $L(1)^+*L(2n)^+$. 
\begin{enumerate}
\item Suppose that $L(1)^{\pm}$ and $L(2n+1)^+$ are the only Jordan-Holder factors in $L(1)^+*L(2n)^+$. Then \[\ch L(2n+1)^+ \simeq k(2n+1)^+ \oplus k(2n-1)^- \oplus \cdots k(3)^{\pm} \oplus k(-3)^{\mp} \oplus k(-5)^{\pm} \oplus \cdots,\] hence $\ch L(2n+1)^+ * L(1)^+$ vanishes in weight 0 but not in weight 4. This contradicts proposition \ref{CharEven} and exactness of $\ch$.
\item Suppose that $L(1)^+*L(2n)^+$ contains a Jordan-Holder factor $L(2m+1)^{\pm}$ for some $1< m < n$. By induction $\ch L(2m+1)^{\pm}$ is nonzero in weights $\pm 1$ and $\pm 3$. Therefore $\ch L(2n+1)^+$ vanishes in weights $\pm 1$ and $\pm 3$. Hence $\ch(L(2n+1)^+ * L(1)^+)$ is nonzero but vanishes in weights 0 and $\pm 2$. This contradicts proposition \ref{CharEven} and exactness of $\ch$.
\end{enumerate}
Therefore $L(1)^+*L(2n)^+ \simeq L(2n+1)^+$ is simple.
\end{proof}

The following lemma gives the Jordan-Holder filtration for costandards. If $n = 0$ we set $L(-1)^{\pm} \coloneqq 0$.

\begin{lemma}\label{Odd*}
There are nonsplit short exact sequences
\begin{enumerate}
\item \label{SESDelta} $0 \rightarrow L(2n-1)^{\pm} \rightarrow \Delta(2n+1)^{\pm} \rightarrow L(2n+1)^{\pm} \rightarrow 0$,
\item \label{SESNabla} $0 \rightarrow L(2n+1)^{\pm} \rightarrow \nabla(2n+1)^{\pm} \rightarrow L(2n-1)^{\pm} \rightarrow 0.$ 
\end{enumerate}
\end{lemma}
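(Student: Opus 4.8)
The plan is to deduce both exact sequences from the highest weight structure of $\Perv_{G_O}(\Gr)$ (established via Proposition~\ref{CharStar}) together with the character formulas of Propositions~\ref{CharStar}, \ref{CharEven} and~\ref{Steinberg}, and in particular from exactness and faithfulness of $\ch$.

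For~(\ref{SESDelta}): the canonical surjection $\Delta(2n+1)^{\pm}\to L(2n+1)^{\pm}$ (that is, $i_{2n+1!}\to i_{2n+1!*}$) has kernel $R\coloneqq\mathrm{rad}\,\Delta(2n+1)^{\pm}$, which restricts to $0$ on $\Gr^{2n+1}$ and is therefore supported on the complement of $\Gr^{2n+1}$ in its closure; so its composition factors are of the form $L(m)^{\epsilon}$ with $m<2n+1$, and since $\Gr^{2n+1}$ lies in the odd component $m$ is odd. Applying the exact functor $\ch$ to $0\to R\to \Delta(2n+1)^{\pm}\to L(2n+1)^{\pm}\to 0$, using Proposition~\ref{CharStar} for $\ch\Delta(2n+1)^{\pm}$ and the formula $\ch L(2n+1)^{\pm}\simeq\ch(L(1)^{+}* L(2n)^{\pm})$ from Proposition~\ref{Steinberg} (combined with monoidality of $\ch$, Proposition~\ref{CharEven}, and $\ch L(1)^{+}\simeq k^{+}(1)\oplus k^{-}(-1)$), a weight-by-weight comparison gives $\ch R\simeq\ch L(2n-1)^{\pm}$.

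I would then upgrade this to an isomorphism $R\simeq L(2n-1)^{\pm}$. Each simple $L(m)^{\epsilon}$ has an effective, nonzero character whose highest nonzero weight is $m$, with value $k^{\epsilon}$ there. Comparing the weight-$(2n-1)$ part of the identity $\ch R\simeq\ch L(2n-1)^{\pm}$ forces $[R:L(2n-1)^{\pm}]=1$ and $[R:L(2n-1)^{\mp}]=0$; subtracting $\ch L(2n-1)^{\pm}$, the characters of the remaining composition factors of $R$ must sum to zero, and since each is effective and nonzero (faithfulness of $\ch$) there are none. Hence $R\simeq L(2n-1)^{\pm}$, which is~(\ref{SESDelta}); the sequence is nonsplit because a standard object has simple head, hence is indecomposable.

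For~(\ref{SESNabla}) I would run the dual argument: $\nabla(2n+1)^{\pm}$ has simple socle $L(2n+1)^{\pm}$, the quotient $\nabla(2n+1)^{\pm}/L(2n+1)^{\pm}$ has composition factors $L(m)^{\epsilon}$ with $m<2n+1$ odd, and since $\ch\nabla(2n+1)^{\pm}\simeq\ch\Delta(2n+1)^{\pm}$ by Proposition~\ref{CharStar}, the same character computation and the same effectivity argument identify this quotient with $L(2n-1)^{\pm}$; nonsplitness follows from the simple socle. (One could alternatively try to deduce~(\ref{SESNabla}) from~(\ref{SESDelta}) by Verdier duality, but this is less direct since $D$ interacts with the orientation sheaf of $\Gr^{2n+1}$.) The only delicate point in this plan is the bookkeeping in the character comparison, together with the observation that effectivity of the simple characters prevents smaller composition factors from being hidden by cancellation; there is no deeper obstacle.
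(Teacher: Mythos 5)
Your proof is correct and follows essentially the same route as the paper: the composition factors of $\Delta(2n+1)^{\pm}$ and $\nabla(2n+1)^{\pm}$ are pinned down by comparing characters via exactness and faithfulness of $\ch$, using Propositions \ref{CharStar}, \ref{CharEven} and \ref{Steinberg} (your explicit ``effectivity plus highest-weight term'' bookkeeping is just an unpacking of the paper's appeal to linear independence of the signed simple characters). The only cosmetic differences are that the paper treats \eqref{SESNabla} first and deduces \eqref{SESDelta} by Verdier duality, and establishes nonsplitness via vanishing of the costalk of $\nabla(2n+1)^{+}$ along $\Gr^{2n-1}$ rather than via the simple head/socle of (co)standard objects; both arguments are valid.
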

\begin{proof}
The signed characters of the simples $L(n)^{\pm}$ are linearly independent by proposition \ref{CharEven} and equation \eqref{CharOdd}. Therefore the simple constituents of $\nabla(2n+1)^{\pm}$ can be determined from its character \eqref{CharStarEq} and proposition \ref{Steinberg}. The short exact sequence \eqref{SESNabla} does not split  otherwise the costalk of $\nabla(2n+1)^+$ along $\Gr^{2n-1}$ would be nonzero. By Verdier duality we also get a nonsplit short exact sequence \eqref{SESDelta}.
\end{proof}

\subsection{Highest weight structure}
We follow section 3.2 of \cite{BGS} for the definition of a highest weight category, except we do not require finitely many simple objects. 

\begin{proposition}
The standard objects $\Delta(n)^{\pm} \hookrightarrow L(n)^{\pm}$ and costandard objects $L(n)^{\pm} \twoheadrightarrow \nabla(n)^{\pm}$ make $\Perv_{G_O}(\Gr)$ a highest weight category.
\end{proposition}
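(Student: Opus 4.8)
The plan is to deduce the highest weight structure formally from Proposition~\ref{CharStar} (perversity of $\Delta(n)^{\pm}$ and $\nabla(n)^{\pm}$) together with the orbit stratification, by an induction on orbits using recollement; no geometric input beyond Propositions~\ref{Intersect} and~\ref{CharStar} is needed. Since $\Gr^{\even}$ and $\Gr^{\odd}$ are disjoint it suffices to treat one component, where the weight poset $\Lambda$ consists of the pairs $(n,\epsilon)$ with $\epsilon \in \{+,-\}$ and $n$ an orbit label, ordered by $(m,\eta) < (n,\epsilon)$ iff $m < n$ (so $(n,+)$ and $(n,-)$ are incomparable); it is interval-finite since $n \ge 0$. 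One first records that $\Perv_{G_O}(\Gr)$ is a $\Hom$-finite, finite-length abelian category in which every object is supported on some $\overline{\Gr^N}$ --- standard constructibility, using $\pi_0(\stab_G(t^n)) = \bZ/2$ and $\ch k = 0$, so that $\Gr^n$ carries exactly the two simple equivariant local systems underlying $L(n)^{\pm}$. Since $\Delta(n)^{\pm}$ and $\nabla(n)^{\pm}$ are perverse, the canonical maps ${}^{p}i_{n!}k^{\pm}_{\Gr^n}[n/2] \twoheadrightarrow i_{n!*}k^{\pm}_{\Gr^n}[n/2] = L(n)^{\pm} \hookrightarrow {}^{p}i_{n*}k^{\pm}_{\Gr^n}[n/2]$ become $\Delta(n)^{\pm} \twoheadrightarrow L(n)^{\pm} \hookrightarrow \nabla(n)^{\pm}$; their kernel and cokernel are supported on $\overline{\Gr^{n-2}}$, hence have composition factors $L(m)^{\eta}$ with $m < n$, and $[\Delta(n)^{\pm} : L(n)^{\pm}] = [\nabla(n)^{\pm} : L(n)^{\pm}] = 1$ because $i_n^{*}\Delta(n)^{\pm} = i_n^{!}\nabla(n)^{\pm} = k^{\pm}_{\Gr^n}[n/2]$. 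This is the ``standard/costandard'' axiom.

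The heart of the argument is an induction on $N$ showing that the Serre subcategory $\Perv_{G_O}(\overline{\Gr^N})$ is a highest weight category in the usual finite sense, with standards $\Delta(n)^{\epsilon}$ and costandards $\nabla(n)^{\epsilon}$ for $n \le N$; taking the union over $N$ then gives the claim for $\Perv_{G_O}(\Gr)$. The base case is a single orbit, whose equivariant perverse sheaves form $\Rep(\bZ/2)$, which is semisimple. For the inductive step one uses the recollement attached to the open inclusion $\Gr^N \hookrightarrow \overline{\Gr^N}$ with closed complement $\overline{\Gr^{N-2}}$, together with $\Perv_{G_O}(\Gr^N) \simeq \Rep(\bZ/2)$ and the inductive hypothesis for $\overline{\Gr^{N-2}}$. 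Proposition~\ref{CharStar} provides exactly the gluing hypothesis --- that \emph{both} the $!$- and the $*$-extension of each level-$N$ local system are perverse --- under which the recollement of two highest weight categories along an open stratum is again highest weight, with new standard and costandard objects at level $N$ equal to $\Delta(N)^{\epsilon}$ and $\nabla(N)^{\epsilon}$: the projective cover of $L(N)^{\epsilon}$ is $i_{N!}$ of a projective on $\Gr^N$, and the projective covers of the $L(m)^{\eta}$, $m < N$, are built from those on $\overline{\Gr^{N-2}}$ by the gluing recipe, which adjoins only copies of $\Delta(N)^{\epsilon}$ at the top of the $\Delta$-flag. The remaining compatibilities $\Hom(\Delta(n)^{\epsilon},\nabla(m)^{\eta}) = k$ if $(n,\epsilon) = (m,\eta)$ and $0$ otherwise, and $\Ext^1(\Delta(n)^{\epsilon},\nabla(m)^{\eta}) = 0$, follow from $!$-versus-$*$ adjunction: for $m \ne n$ one of $\Delta(n)^{\epsilon}$, $\nabla(m)^{\eta}$ is a $!$-extension and the other a $*$-extension relative to the top stratum of $\overline{\Gr^{\max(n,m)}}$ and its closed complement, so the $\RHom$ vanishes, while for $n = m$ it reduces to $\RHom_{\Gr^n}(k^{\epsilon},k^{\eta})$, which is $k$ for $\epsilon = \eta$ and $0$ otherwise by semisimplicity of $\Rep(\bZ/2)$.

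The only genuinely nonformal ingredient is Proposition~\ref{CharStar} itself (perversity of $\Delta(n)^{\pm}$ and $\nabla(n)^{\pm}$, resting on the orbit intersections of Proposition~\ref{Intersect}), which is already in hand; everything else is the standard recollement formalism for stratified perverse sheaves. I expect the main bookkeeping obstacle to be the verification, inside the gluing step, that the projective covers of the lower simples acquire only higher-indexed standards in their $\Delta$-flags --- that is, that the glued stratified structure is genuinely highest weight rather than merely standardly stratified. This is precisely the point at which one must use that \emph{both} $i_{N!}$ and $i_{N*}$ of the level-$N$ local systems are perverse, not just one of them.
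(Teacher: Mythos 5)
Your proposal is correct, but it takes a genuinely different route from the paper. The paper's proof is two lines: having established in Proposition~\ref{CharStar} that $\Delta(n)^{\pm}$ and $\nabla(n)^{\pm}$ are perverse, it invokes the characterization of highest weight categories from \cite{BGS}, \S 3.2, for which the only remaining axiom is $\Ext^2(\Delta(n)^{\pm},\nabla(m)^{\pm})=0$, dispatched ``by adjunction''; the existence of projective covers with $\Delta$-flags is then \emph{deduced} afterwards (Proposition~\ref{ProjStandard}) from the general theory. You instead verify the projective-cover form of the definition directly, by induction over the closed unions of orbits using the recollement gluing of highest weight categories along an open stratum. Both arguments rest on exactly the same geometric input (Proposition~\ref{CharStar}, i.e.\ perversity of both the $!$- and $*$-extensions), and your identification of the gluing step as the crux is accurate. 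What your route buys: it produces the indecomposable projectives and their $\Delta$-flags explicitly along the way, and it never needs an $\Ext^2$ computation --- which is a genuine advantage here, since in the $G_O$-equivariant derived category $\RHom(\Delta(n)^{\epsilon},\nabla(n)^{\eta})$ reduces by adjunction to equivariant cohomology of the orbit $\Gr^n$, which is \emph{not} concentrated in degree $0$ (the stabilizers $B$ and $G$ are positive-dimensional), so the diagonal case $n=m$ of the paper's adjunction argument requires an extra reduction (e.g.\ to the off-diagonal case via the filtrations of $\Delta$ and $\nabla$ by simples, or passage to Yoneda $\Ext$ in the abelian category). The cost of your route is that the gluing statement you invoke --- that recollement of highest weight categories along an open stratum with both extensions perverse is again highest weight, via the universal-extension construction of projectives --- is itself a nontrivial general theorem (\cite{BGS} \S 3.3, or \cite{Ach}); since you cite it rather than reprove it, the two proofs ultimately import a comparable amount of general formalism. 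One small point worth making explicit in your write-up: the projectivity of $\Delta(N)^{\epsilon}=i_{N!}k^{\epsilon}_{\Gr^N}[N/2]$ in $\Perv_{G_O}(\overline{\Gr^N})$ uses $\Ext^1$-adjunction together with $H^1$ of the classifying space of the stabilizer vanishing in characteristic $0$, which is where your hypotheses $\pi_0(\stab)=\bZ/2$ and $\mathrm{char}\,k=0$ actually enter.
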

\begin{proof}
Recall that $\Delta(n)^{\pm}$ and $\nabla(m)^{\pm}$ are perverse by proposition \ref{CharStar}.
The only nontrivial condition is that $\Ext^2(\Delta(n)^{\pm}, \nabla(m)^{\pm}) = 0$ which follows by adjunction. 
\end{proof}

By BGG reciprocity, the odd projectives are described as follows.

\begin{proposition}\label{ProjStandard}
The projective cover of $L(2n+1)^{\pm}$ admits a nonsplit standard filtration \beq \label{PStandard}0 \rightarrow \Delta(2n+3)^{\pm} \rightarrow P(2n+1)^{\pm} \rightarrow \Delta(2n+1)^{\pm} \rightarrow 0.\eeq Therefore $P(2n+1)^{\pm}$ admits a Jordan-Holder filtration with graded pieces $L(2n+1)^{\pm}, L(2n+3)^{\pm}, L(2n-1)^{\pm}, L(2n+1)^{\pm}$. (If $n = 0$ we set $L(-1)^{\pm} \coloneqq 0$.)
\end{proposition}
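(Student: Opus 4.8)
The plan is to establish the standard filtration \eqref{PStandard} by first identifying a candidate for $P(2n+1)^{\pm}$ as an extension, then checking projectivity via the character functor and a $\Hom$-computation. Since $\Perv_{G_O}(\Gr)$ is highest weight (and the odd component splits off as a sum of the two blocks $\Perv_{G_O}(\Gr^{\odd})^{\pm}$), BGG reciprocity will tell us that $P(2n+1)^{\pm}$ has a standard filtration in which $\Delta(m)^{\pm}$ appears with multiplicity $[\nabla(m)^{\pm} : L(2n+1)^{\pm}]$. By lemma \ref{Odd*}, the costandard $\nabla(2m+1)^{\pm}$ has $L(2n+1)^{\pm}$ as a Jordan--Holder factor precisely when $2m+1 = 2n+1$ or $2m-1 = 2n+1$, i.e. $m = n$ or $m = n+1$, each with multiplicity one. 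Hence the standard filtration of $P(2n+1)^{\pm}$ has subquotients exactly $\Delta(2n+1)^{\pm}$ and $\Delta(2n+3)^{\pm}$, each once, and the highest weight (i.e. the quotient) must be $\Delta(2n+1)^{\pm}$; this forces the form \eqref{PStandard}. The Jordan--Holder factors then follow by combining this with the two-term filtrations of $\Delta(2n+1)^{\pm}$ and $\Delta(2n+3)^{\pm}$ from lemma \ref{Odd*}\eqref{SESDelta}: $\Delta(2n+1)^{\pm}$ contributes $L(2n-1)^{\pm}, L(2n+1)^{\pm}$ and $\Delta(2n+3)^{\pm}$ contributes $L(2n+1)^{\pm}, L(2n+3)^{\pm}$, giving the multiset $L(2n+1)^{\pm}, L(2n+3)^{\pm}, L(2n-1)^{\pm}, L(2n+1)^{\pm}$ as claimed.

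The remaining content is the nonsplitness of \eqref{PStandard}, equivalently that the projective cover is genuinely a nontrivial extension rather than $\Delta(2n+1)^{\pm} \oplus \Delta(2n+3)^{\pm}$. I would argue this abstractly: if the extension split, then $P(2n+1)^{\pm} = \Delta(2n+1)^{\pm} \oplus \Delta(2n+3)^{\pm}$, but $\Delta(2n+3)^{\pm}$ is not projective (it has simple top $L(2n+3)^{\pm} \neq L(2n+1)^{\pm}$ and is a proper quotient of $P(2n+3)^{\pm}$, which by the same count has length $4$), so $\Delta(2n+3)^{\pm}$ cannot be a summand of a projective cover of $L(2n+1)^{\pm}$ — more precisely, a projective cover is indecomposable, so a split decomposition is impossible unless one summand vanishes, and neither does. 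Thus \eqref{PStandard} is nonsplit.

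The main obstacle is making the appeal to BGG reciprocity rigorous in this infinite highest-weight setting: the paper's highest weight category has infinitely many simples, so one must know that projective covers exist and have finite-length standard filtrations. I would handle this by exhibiting $P(2n+1)^{\pm}$ concretely rather than relying on an abstract existence theorem — for instance, as $\nu(L(2)^+ \boxtimes (-)_{\bR^{>0}})$ applied to an appropriate simple, or as a convolution $L(1)^+ * P'$ for a suitable even projective $P'$ built from $L(2)^+$-type objects — and then verifying directly that it is projective by checking $\Hom(P(2n+1)^{\pm}, \nabla(m)^{\pm})$ is concentrated in degree $0$ with the right dimensions, using the adjunction of proposition \ref{Duality} and exactness and faithfulness of $\ch$ (propositions \ref{CharStar}, \ref{CharEven}, and the monoidality \eqref{CharMon}). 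Once projectivity and the standard-filtration multiplicities are in hand, the length-$4$ claim and nonsplitness are bookkeeping.
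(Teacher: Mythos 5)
Your proposal is correct and follows essentially the same route as the paper: existence of a projective cover with standard filtration from the general highest weight theory, BGG reciprocity combined with lemma \ref{Odd*} to pin down the multiplicities of $\Delta(2n+1)^{\pm}$ and $\Delta(2n+3)^{\pm}$, and indecomposability of the projective cover to rule out splitting. Your worry about the infinitely-many-simples issue is handled in the paper by allowing $P(2n+1)^{\pm}$ a priori to be a pro-object and observing that the multiplicity count shows it is a genuine object of finite length, while your suggested concrete construction by convolution is exactly what the paper carries out afterwards in proposition \ref{ProjConvolve}.
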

\begin{proof}
The existence of a projective cover $P(2n+1)^{\pm}$ with a standard filtration is a general result about highest weight categories, see theorem 3.2.1 of \cite{BGS}. Since $\Perv_{G_O}(\Gr)$ has infinitely many simple objects, $P(2n+1)^{\pm}$ may a priori be a pro-object. But the following calculation shows that it is a genuine object supported on $\overline{\Gr}^{2n+3}$.

By BGG reciprocity and lemma \ref{Odd*}, the number of times that $\Delta(m)^?$ appears in the standard filtration of $P(2n+1)^+$ is \[\dim \Hom(P(2n+1)^+, \nabla(m)^?) = [L(2n+1)^+: \nabla(m)^?] = \begin{cases} 1 & \text{ if } ? = + \text{ and } m = 2n+1 \text{ or } 2n+3, \\ 0 & \text{ otherwise.}
\end{cases}\] Moreover \eqref{PStandard} does not split because $P(2n+1)^{\pm}$ is indecomposable.
\end{proof}

\subsection{Structure of projectives}
We now explain how all projectives, including on the even component, can be constructed from $P(1)^+$ by convolution.

\begin{proposition}\label{ProjConvolve}
\begin{enumerate}
\item The projective cover of $L(2n)^{\pm}$ is $P(2n)^{\pm} \simeq L(2n+1)^{\pm} * P(1)^+$.
\item The projective cover of $L(2n+1)^{\pm}$ is $P(2n+1)^{\pm} \simeq L(2n)^{\pm}* P(1)^+$. 
\end{enumerate}
\end{proposition}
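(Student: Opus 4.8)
The plan is to prove both statements simultaneously by induction on $n$, using that convolution with $P(1)^+$ is exact (since convolution is exact) and that it sends projectives to projectives, together with BGG reciprocity to identify the result. First I would record that $P(1)^+$ is projective, so for any $F$ the object $F * P(1)^+$ has a standard filtration and is again projective; it therefore decomposes as a direct sum of indecomposable projectives. Then I would compute $\ch(L(2n+1)^{\pm} * P(1)^+)$ and $\ch(L(2n)^{\pm} * P(1)^+)$ using monoidality of $\ch$ (the displayed proposition on \eqref{CharMon}), proposition \ref{CharEven}, equation \eqref{CharOdd}, and the character $\ch(P(1)^+)$, which I would extract from the standard filtration \eqref{PStandard} of $P(1)^+$ and \eqref{CharStarEq}. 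Matching signed characters — which are linearly independent by lemma \ref{Odd*}'s argument — pins down the Jordan--Hölder multiplicities of $F * P(1)^+$.

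Next I would argue indecomposability. For part (2), $L(2n)^{\pm} * P(1)^+$ is a projective whose character shows its socle/top involves $L(2n+1)^{\pm}$ with the right multiplicity; to see it is exactly $P(2n+1)^{\pm}$ and not a larger sum, I would compare $\dim\Hom(L(2n)^{\pm} * P(1)^+, \nabla(m)^?)$ with $[L(2n+1)^{\pm} : \nabla(m)^?]$ computed in proposition \ref{ProjStandard}. Using the adjunction of proposition \ref{Duality}, $\Hom(L(2n)^{\pm} * P(1)^+, \nabla(m)^?) \simeq \Hom(P(1)^+, (D a^* L(2n)^{\pm}) * \nabla(m)^?)$; one then needs $D a^* L(2n)^{\pm} \simeq L(2n)^{\pm}$ (self-duality of even simples, which follows since $a$ fixes each spherical orbit and the local systems are self-dual) and a computation of the composition factors of $L(2n)^{\pm} * \nabla(m)^?$ via monoidality of $\ch$. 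This reduces the $\Hom$ to reading off a multiplicity of $L(1)^+$, matching the count in proposition \ref{ProjStandard}. The same scheme with $L(2n+1)^{\pm}$ in place of $L(2n)^{\pm}$ handles part (1), giving the even projectives $P(2n)^{\pm}$; here proposition \ref{CharEven} and Clebsch--Gordan \eqref{ClebschGordan} control the even side.

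The main obstacle I expect is the indecomposability/exactness of the identification: a projective with a known standard filtration could in principle split off extra indecomposable summands, so the character computation alone is insufficient and one genuinely needs the $\Hom$-pairing with costandards (BGG reciprocity) to be computed exactly. The delicate point there is computing the Jordan--Hölder content of $L(2n)^{\pm} * \nabla(m)^?$ and $L(2n+1)^{\pm} * \nabla(m)^?$: these are not perverse a priori only in the sense that one must work with their classes in the Grothendieck group, but since convolution is exact the class is determined by monoidality of $\ch$ and the known classes of simples and costandards, so linear independence of signed characters (lemma \ref{Odd*}) finishes it. A secondary nuisance is bookkeeping the two sign labels $\pm$ through convolution with $L(1)^+$, but proposition \ref{Steinberg} already tells us $L(1)^+ * L(2n)^{\pm} \simeq L(2n\pm 1)^{\pm}$, so the sign is preserved throughout and causes no real difficulty.
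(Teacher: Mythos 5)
Your proposal is correct in its two essential mechanisms, which are exactly the paper's: (i) $F * P(1)^+$ is projective because $\Hom(F*P(1)^+,-)\simeq \Hom(P(1)^+, (Da^*F)*-)$ is a composition of exact functors (proposition \ref{Duality} plus exactness of convolution), and (ii) the resulting $\Hom$-spaces reduce, via the same adjunction, to counting the multiplicity of $L(1)^+$ in a convolution, which is controlled by proposition \ref{Steinberg} and \eqref{ClebschGordan}. Where you diverge is the identification step: you pair the projective with costandards $\nabla(m)^?$ and invoke BGG reciprocity, whereas the paper pairs directly with the simples $L(m)^?$. The paper's choice is strictly cheaper: since $\dim\Hom(P(\lambda), L(\mu)) = \delta_{\lambda\mu}$, the computation $\Hom(L(2n+1)^+*P(1)^+, L(m)^?) = k$ for $(m,?)=(2n,+)$ and $0$ otherwise immediately exhibits $L(2n+1)^+*P(1)^+$ as the indecomposable projective cover of $L(2n)^+$, with no character computation, no induction, and no inversion of a decomposition matrix. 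Your route works but needs more input than you acknowledge: extracting the summand multiplicities from $\dim\Hom(P,\nabla(m)^?)=\sum_\lambda m_\lambda[\nabla(m)^?:L(\lambda)]$ requires the Jordan--H\"older content of \emph{all} relevant costandards, and for part (1) the projective lives on the even component, where the paper never records $[\nabla(2m)^?:L(\lambda)]$ (lemma \ref{Odd*} only treats the odd ones); you would have to compute these separately from propositions \ref{CharStar} and \ref{CharEven}. Your preliminary character computation of $F*P(1)^+$ is likewise dispensable. Two small corrections: proposition \ref{Steinberg} gives $L(1)^+ * L(2n)^{\pm}\simeq L(2n+1)^{\pm}$, not $L(2n\pm 1)^{\pm}$; and your worry about $L(2n)^{\pm}*\nabla(m)^?$ failing to be perverse is moot, since convolution is exact on $\Perv_{G_O}(\Gr)$. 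You are right, and the paper is silent about it, that the adjunction as used requires $Da^*L(n)^{\pm}\simeq L(n)^{\pm}$; flagging that is a point in your favor.
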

\begin{proof}
Using proposition \ref{Duality} and exactness of convolution, the functor \[\Hom(L(2n+1)^+*P(1)^+, -) = \Hom(P(1)^+, L(2n+1)^+ * -)\] is exact, hence $L(2n+1)^+ * P(1)^+$ is projective. Moreover $L(2n+1)^+ * L(2m)^? \simeq L(1)^+ * L(2n)^+ * L(2m)^?$ admits $L(1)$ as a Jordan-Holder factor only if $? = +$ and $m = n$ by \eqref{ClebschGordan} and proposition \ref{Steinberg}. Therefore \[\Hom(L(2n+1)^+* P(1)^+, L(m)^?) \simeq \Hom(P(1)^+, L(2n+1)^+ * L(m)^?) \simeq \begin{cases} k & \text{ if } ? = + \text{ and }m = 2n, \\ 0 & \text{ otherwise,}\end{cases}\] hence $L(2n+1)^+ * P(1)^+ \simeq P(2n)^+$ is the projective cover of $L(2n)^+$.
\end{proof}

\begin{proposition}\label{SocProj}
The socles of odd projectives are given by $\soc(P(2n+1)^{\pm}) \simeq L(2n+1)^{\pm}$.
\end{proposition}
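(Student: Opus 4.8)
The plan is to show that $P(2n+1)^{\pm}$ has a \emph{simple} socle, which must then be $L(2n+1)^{\pm}$ since the projective cover of $L(2n+1)^{\pm}$ is indecomposable and (as a self-dual situation will reveal) its own injective hull. First I would note that, by Verdier duality together with the self-duality $D a^* \Delta(m)^{\pm} \simeq \nabla(m)^{\pm}$ of the standard/costandard objects (which follows from Proposition \ref{Duality} applied to the construction in Proposition \ref{ProjConvolve}, or directly from $D i_{n!} = i_{n*} D$), the indecomposable projective $P(2n+1)^{\pm}$ is also the indecomposable \emph{injective} hull $I(2n+1)^{\pm}$ of the simple $L(2n+1)^{\pm}$. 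Indeed $P(2n+1)^+ \simeq L(2n)^+ * P(1)^+$ by Proposition \ref{ProjConvolve}, and applying $D a^*$ — which is a contravariant exact autoequivalence fixing the simples $L(m)^{\pm}$ (it fixes $L(2n)^+$ since $\psi$ commutes with duality, and it fixes $L(1)^+$ since $\Gr^1$ is a single point up to the $\bZ/2$) and swapping projectives with injectives — shows that $P(2n+1)^+$ is injective, with the same simple top $L(2n+1)^+$, hence is $I(2n+1)^+$.

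Given that $P(2n+1)^{\pm} \simeq I(2n+1)^{\pm}$, the socle is simple: an injective hull of a simple object always has simple socle equal to that simple. So $\soc(P(2n+1)^{\pm})$ must be a single $L(m)^?$. To pin down $m$ and $?$, I would compute $\Hom(L(m)^?, P(2n+1)^{\pm})$ using that $P(2n+1)^{\pm}$ is the injective hull, so this $\Hom$ is nonzero exactly when $L(m)^?$ embeds in $P(2n+1)^{\pm}$, i.e. exactly when $L(m)^?$ is in the socle. By the dual of the BGG-reciprocity computation in Proposition \ref{ProjStandard} — using the costandard (rather than standard) filtration of the injective $I(2n+1)^{\pm}$, whose multiplicities are $[\nabla(m)^? : I(2n+1)^{\pm}\text{-socle side}] = [L(2n+1)^{\pm}: \Delta(m)^?]$ — together with the Jordan–Hölder content of $P(2n+1)^{\pm}$ recorded in Proposition \ref{ProjStandard} (namely the graded pieces $L(2n+1)^{\pm}, L(2n+3)^{\pm}, L(2n-1)^{\pm}, L(2n+1)^{\pm}$), the only simple that can possibly sit in the socle with the right multiplicities is $L(2n+1)^{\pm}$. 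Concretely, one checks $\Hom(\Delta(2n+1)^{\pm}, P(2n+1)^{\pm}) \neq 0$ and that the socle of the costandard filtration bottom is $\nabla(2n+1)^{\pm}$, whose socle is $L(2n+1)^{\pm}$ by Lemma \ref{Odd*}\eqref{SESNabla}.

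The main obstacle is verifying that $D a^*$ genuinely fixes each simple $L(m)^{\pm}$ on the nose, rather than merely permuting the $\pm$ labels or shifting — in other words ruling out a ``twist'' that would make $P(2n+1)^+$ the injective hull of $L(2n+1)^-$ or of some other simple. For the even simples this is immediate from compatibility of $\psi$ with Verdier duality in \cite{Nad}; for $L(1)^+$ one uses that $\Gr^1 \cong \mathbb{R}$ with the $\bZ/2$ acting by $-1$ (Proposition \ref{Intersect}), so $L(1)^+ = \nabla(1)^+ = \Delta(1)^+$ is already self-dual and carries the trivial local system, hence $D a^* L(1)^+ \simeq L(1)^+$ and not $L(1)^-$; and then for odd simples one propagates via Proposition \ref{Steinberg}, $L(2n+1)^{\pm} \simeq L(1)^+ * L(2n)^{\pm}$, using that $D a^*$ is monoidal-compatible in the sense of Proposition \ref{Duality}. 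Once that bookkeeping is settled, the socle statement is a formal consequence of the injective-hull description plus the already-established composition series.
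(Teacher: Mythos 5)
Your strategy---show $P(2n+1)^{\pm}$ is injective, hence an injective hull of a simple, hence has simple socle---is a reasonable idea, but the pivotal step has a genuine gap. A contravariant exact autoequivalence $Da^*$ sends the indecomposable projective $P(2n+1)^+$ to \emph{an} indecomposable injective, namely $Da^*P(2n+1)^+ \simeq I(2n+1)^+$ (its socle is $Da^*$ of the top of $P(2n+1)^+$). It does \emph{not} show that $P(2n+1)^+$ itself is injective: for that you would need $Da^*P(2n+1)^+ \simeq P(2n+1)^+$, i.e.\ self-duality of the projective, and that is essentially equivalent to the statement you are trying to prove (if the socle of $P(2n+1)^+$ were, say, $L(2n+3)^+$, then $P(2n+1)^+$ would embed in $I(2n+3)^+$ and would not be self-dual). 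Equally, if you try to route injectivity through the adjunction $\Hom(-, L(2n)^+ * P(1)^+) \simeq \Hom(L(2n)^+ * -, P(1)^+)$ of Proposition \ref{Duality}, everything reduces to the injectivity (equivalently, self-duality, equivalently the socle) of $P(1)^+$ --- a base case your argument never touches. This is exactly the content the paper supplies by hand: from the nonsplit filtration $0 \to \Delta(3)^{+} \to P(1)^{+} \to L(1)^{+} \to 0$ and the nonsplitness of $0 \to L(1)^+ \to \Delta(3)^+ \to L(3)^+ \to 0$ (Lemma \ref{Odd*}) one gets $\Hom(L(2m+1)^{\pm}, P(1)^+) = 0$ for $m \neq 0$, and then the general case follows by the duality adjunction and Proposition \ref{ProjConvolve}. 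Without some such direct computation your argument is circular at $n = 0$.

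Two smaller points. First, $\Gr^1$ is not $\bR$; it is the full $G_O$-orbit, isomorphic to $G/B \cong \bR\mathrm{P}^1$ (only the open cell $S^1 \cap \Gr^1$ is $\bR$), though this does not affect the self-duality of $L(1)^+$ that you need. Second, even granting injectivity, ``injective with simple top $L(2n+1)^+$'' does not by itself identify the object as $I(2n+1)^+$ --- an indecomposable injective is the hull of its \emph{socle}, not its top --- but your second paragraph does repair this correctly by comparing Jordan--H\"older contents of the candidate injective hulls. The one irreparable-as-written step is the injectivity claim itself.
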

\begin{proof}
First consider the case $n = 0$. Then we have a nonsplit short exact sequence \[0 \rightarrow \Delta(3)^{\pm} \rightarrow P(1)^{\pm} \rightarrow L(1)^{\pm} \rightarrow 0.\] If $m \neq 0$ then $\Hom(L(2m+1)^{\pm}, P(1)^+) \simeq \Hom(L(2m+1)^{\pm}, \Delta(3)^+) \simeq 0$  by lemma \ref{Odd*}.

For $n$ general, proposition \ref{ProjConvolve} implies that \[\Hom(L(2m+1)^?, P(2n+1)^+) \simeq \Hom(L(2n)^+ *L(2m)^? * L(1)^+, P(1)^+) \simeq \begin{cases} k & \text{ if } ? = + \text{ and }m = n, \\ 0 & \text{ otherwise.}\end{cases}\]
\end{proof}

\subsection{Blocks on the odd component} Here we prove that the trivial and sign representations of $\bZ/2$ do not interact on the odd component.

Let $\Perv_{G_O}(\Gr^{\odd})^{\pm}$ be the full subcategory of perverse sheaves whose Jordan-Holder factors are all of the form $L(n)^{\pm}$ respectively.

\begin{proposition}
On the odd component \[\Perv_{G_O}(\Gr^{\odd}) \simeq \Perv_{G_O}(\Gr^{\odd})^+ \oplus \Perv_{G_O}(\Gr^{\odd})^-.\] 
\end{proposition}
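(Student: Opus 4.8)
The plan is to show that the two subcategories $\Perv_{G_O}(\Gr^{\odd})^+$ and $\Perv_{G_O}(\Gr^{\odd})^-$ are ``orthogonal'' in the sense that there are no extensions between a simple of one type and a simple of the other; once this is established the block decomposition is a general fact. First I would reduce the statement to the vanishing $\Ext^1(L(2n+1)^+, L(2m+1)^-) = 0$ for all $n, m \geq 0$ (and the symmetric statement with $+$ and $-$ swapped). Indeed, every object of $\Perv_{G_O}(\Gr^{\odd})$ has a finite Jordan--Holder filtration with factors among the $L(k)^{\pm}$, so if no $L(k)^+$ extends any $L(l)^-$ then every object canonically splits as a direct sum of a piece with all factors of type $+$ and a piece with all factors of type $-$; functoriality of this splitting gives the claimed direct sum decomposition of categories.

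Next I would compute the relevant $\Ext^1$ groups using the projective covers constructed earlier. Since $P(2m+1)^-$ is the projective cover of $L(2m+1)^-$, we have $\Ext^1(L(2n+1)^+, L(2m+1)^-) \hookrightarrow$ the cokernel-type data visible in the radical filtration of $P(2m+1)^-$; more precisely $\Ext^1(L(2n+1)^+, L(2m+1)^-) = 0$ as soon as $L(2n+1)^+$ does not appear in $\mathrm{rad}(P(2m+1)^-)/\mathrm{rad}^2(P(2m+1)^-)$. But proposition \ref{ProjStandard} tells us the Jordan--Holder factors of $P(2m+1)^-$ are $L(2m+1)^-, L(2m+3)^-, L(2m-1)^-, L(2m+1)^-$ — all of type $-$. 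Hence no composition factor of $P(2m+1)^-$ is of the form $L(2n+1)^+$, so in particular $\Ext^1(L(2n+1)^+, L(2m+1)^-) = 0$, and symmetrically $\Ext^1(L(2n+1)^-, L(2m+1)^+) = 0$.

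With these vanishings in hand I would finish as follows: let $F \in \Perv_{G_O}(\Gr^{\odd})$. Induct on the length of $F$; picking a simple sub or quotient and using the $\Ext^1$-vanishing between the two types, one peels off the type-$+$ part and the type-$-$ part, producing a functorial decomposition $F \simeq F^+ \oplus F^-$ with $F^{\pm} \in \Perv_{G_O}(\Gr^{\odd})^{\pm}$. Since $\Hom(L(k)^+, L(l)^-) = 0$ trivially (distinct simples), there are also no nonzero morphisms between objects of the two subcategories, so the decomposition of the category is as direct summands, giving the stated equivalence.

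The main obstacle is making sure the projective covers $P(2n+1)^{\pm}$ really are genuine objects (not pro-objects) with exactly the four composition factors listed, so that the $\Ext^1$ computation via projectives is legitimate for \emph{all} pairs $(n,m)$ simultaneously — but this is precisely the content of proposition \ref{ProjStandard}, which we may invoke. A secondary point to be careful about is that $\Perv_{G_O}(\Gr^{\odd})$ has infinitely many simples, so one should phrase the block decomposition in terms of the (finite-length) objects and their functorial splitting rather than appealing to a finite-dimensional-algebra block statement; this is routine once the $\Ext^1$-vanishing is known.
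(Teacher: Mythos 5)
Your proof is correct and is essentially the paper's own argument, which simply observes that each projective cover $P(2n+1)^{\pm}$ has all Jordan--Holder factors of the same sign; you have just spelled out the resulting $\Ext^1$-vanishing and splitting in more detail. (Minor slip: $\Ext^1(L(2n+1)^+, L(2m+1)^-)$ should be controlled by the radical layers of $P(2n+1)^+$, not of $P(2m+1)^-$, but since you establish both symmetric vanishings and every projective is sign-homogeneous, this does not affect the argument.)
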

\begin{proof}
Follows because each simple $L(2n+1)^+$ admits a projective cover $P(2n+1)^+$ whose Jordan-Holder factors are of the form $L(2m+1)^+$. 
\end{proof}

On the even component there is no such splitting. 

\subsection{Maps between projectives on the odd component} We now explicitely calculate all maps between projectives on the odd component.

\begin{proposition}\label{ProjMaps}
On the odd component, all nonzero maps between projectives are spanned by 
\begin{enumerate}
\item[-] $x_n \in \Hom(P(2n+1)^+, P(2n+3)^+)$, 
\item[-] $y_n \in \Hom(P(2n+1)^+, P(2n-1)^+)$, \item[-]  $\id_n, z_n \in \Hom(P(2n+1)^+, P(2n+1)^+)$,
\end{enumerate}
with relations $z_n = x_{n-1}y_n = y_{n+1}x_n$, $z_n^2 = 0$, $x_nz_n = 0$, and $y_nz_n = 0$.
\end{proposition}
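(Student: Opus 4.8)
The plan is to build all the Hom-spaces between the $P(2n+1)^+$ out of the standard and Jordan--Holder filtrations established in Propositions \ref{ProjStandard} and \ref{SocProj}, and then pin down the composition law by a dimension count together with the observation that a nonzero composite of two maps is forced by the socle/head structure. First I would record the basic numerology: from Proposition \ref{ProjStandard} the projective $P(2n+1)^+$ has standard filtration $\Delta(2n+1)^+$ (top) and $\Delta(2n+3)^+$ (bottom), hence by lemma \ref{Odd*} its Jordan--Holder factors are $L(2n+1)^+, L(2n+3)^+, L(2n-1)^+, L(2n+1)^+$, so $[P(2n+1)^+ : L(2m+1)^+]$ equals $2$ if $m=n$, equals $1$ if $m = n\pm 1$, and $0$ otherwise. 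Since $P(2n+1)^+$ is the projective cover of $L(2n+1)^+$, we have $\dim\Hom(P(2n+1)^+, P(2m+1)^+) = [P(2m+1)^+ : L(2n+1)^+]$, which is $2$ when $m=n$ and $1$ when $m = n\pm1$ and $0$ otherwise. This immediately gives the claimed list of generators: a one-dimensional $\Hom(P(2n+1)^+, P(2n+3)^+) = k\cdot x_n$, a one-dimensional $\Hom(P(2n+1)^+, P(2n-1)^+) = k\cdot y_n$, and a two-dimensional $\End(P(2n+1)^+) = k\cdot\id_n \oplus k\cdot z_n$, where $z_n$ is any non-invertible endomorphism (well-defined up to scalar and addition of a multiple of $\id_n$; normalize by asking that $z_n$ kill the head, i.e. factor through $\mathrm{rad}$).

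Next I would analyze the maps explicitly using the socle. By Proposition \ref{SocProj}, $\soc(P(2m+1)^+) \simeq L(2m+1)^+$, so any nonzero map $P(2n+1)^+ \to P(2m+1)^+$ has image containing (in fact equal to, being indecomposable-cover image) a submodule whose socle is $L(2m+1)^+$; since $L(2m+1)^+$ appears in $P(2n+1)^+$ only for $m \in \{n-1, n, n+1\}$, this re-confirms the support of the Hom-quiver and, more usefully, shows that the image of $x_n$ is the (unique) submodule of $P(2n+3)^+$ isomorphic to a quotient of $P(2n+1)^+$; concretely $x_n$ factors as $P(2n+1)^+ \twoheadrightarrow \Delta(2n+1)^+ \hookrightarrow P(2n+3)^+$ using that $\Delta(2n+1)^+$ is the bottom standard layer of $P(2n+3)^+$ by Proposition \ref{ProjStandard}. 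Similarly $y_n$ factors as $P(2n+1)^+ \twoheadrightarrow \Delta(2n+1)^+$? --- here one must be slightly careful: $y_n$ is the map $P(2n+1)^+ \to P(2n-1)^+$ dual (under BGG/Verdier duality) to $x_{n-1}$, and it factors through the top standard quotient $\Delta(2n+1)^+$ of $P(2n+1)^+$ composed with the inclusion $\Delta(2n+1)^+ = \Delta((2(n-1)+1)+2)^+ \hookrightarrow P(2n-1)^+$ as the \emph{bottom} standard. Having these factorizations, I compute the composites: $x_{n-1}y_n$ is $P(2n+1)^+ \twoheadrightarrow \Delta(2n+1)^+ \hookrightarrow P(2n-1)^+ \twoheadrightarrow \Delta(2n-1)^+ \hookrightarrow P(2n+1)^+$, a nonzero non-invertible endomorphism (its image is the bottom $\Delta(2n-1)^+$ copy, not all of $P(2n+1)^+$), hence a nonzero multiple of $z_n$; rescaling $x_n$ or $y_n$ we may take $z_n := x_{n-1}y_n$. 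The same computation run the other way gives $y_{n+1}x_n = $ (nonzero multiple of $z_n$), and after fixing normalizations consistently one gets $z_n = x_{n-1}y_n = y_{n+1}x_n$. The relations $z_n^2 = 0$, $x_n z_n = 0$, $y_n z_n = 0$ then follow because $z_n$ has image equal to the length-two submodule $\Delta(2n-1)^+$ of $P(2n+1)^+$ whose socle is $L(2n-1)^+$ (not $L(2n+1)^+$), so post-composing $z_n$ with anything and pre-composing anything with $z_n$ lands in / comes out of a too-small socle layer; more concretely $z_n^2$ would have to factor $\Delta(2n-1)^+ \to P(2n+1)^+ \to \ldots$ but $\Hom(\Delta(2n-1)^+, P(2n+1)^+)$ composed with $z_n$ is zero since $\Delta(2n-1)^+$ maps into the socle and $z_n$ kills the socle copy of $L(2n+1)^+$... the cleanest phrasing is: $z_n$ factors through $L(2n-1)^+$'s standard, whose further image under $x_n$ lands in a layer of $P(2n+3)^+$ not present ($L(2n-1)^+$ does not appear in $P(2n+3)^+$), giving $x_n z_n = 0$; symmetrically $y_n z_n = 0$ lands in $P(2n-3)^+$ where $L(2n-1)^+$ does appear but the relevant composite $\Delta(2n-1)^+ \to P(2n-3)^+$ factors through the head and then $z_{n-1}$'s image avoids it --- I would settle this by the dimension count $\Hom(P(2n+1)^+, P(2n-1)^+) = k$, so $y_n z_n \in k y_n$, and $y_n z_n$ kills the head of $P(2n+1)^+$ while $y_n$ does not, forcing $y_n z_n = 0$; and $z_n^2 \in k\,\id_n \oplus k z_n$ with $z_n^2$ killing the head, so $z_n^2 \in k z_n$, but $z_n^2$ also has image strictly inside $\mathrm{im}(z_n)$ (one JH factor shorter), forcing $z_n^2 = 0$.

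The final step is completeness: I must check there are no other nonzero maps, e.g. $P(2n+1)^+ \to P(2m+1)^+$ with $|m-n| \geq 2$, which is already covered by the vanishing of the relevant JH multiplicity, and that I have exhausted the two-dimensional endomorphism ring (clear, since $\{\id_n, z_n\}$ is visibly linearly independent: $z_n$ is not invertible). I would also note the associativity-type consistency $x_{n+1}x_n = 0$ and $y_{n-1}y_n = 0$ (forced since $\Hom(P(2n+1)^+, P(2n+5)^+) = 0$), though these are subsumed in "all nonzero maps are spanned by" the listed ones.

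\textbf{Main obstacle.} The routine part is the numerology; the genuinely delicate point is fixing the \emph{normalizations} of $x_n, y_n, z_n$ simultaneously across all $n$ so that the single clean relation $z_n = x_{n-1}y_n = y_{n+1}x_n$ holds on the nose rather than up to a sequence of nonzero scalars $\lambda_n$. One should either (i) define $z_n := x_{n-1}y_n$ and then prove $y_{n+1}x_n$ is a nonzero multiple of it and absorb that scalar by rescaling $y_{n+1}$ (propagating the choice inductively in $n$, which is consistent because each rescaling only affects one generator and its dual), or (ii) invoke Verdier/BGG duality to identify $y_n$ with the transpose of $x_{n-1}$ from the outset, making the symmetry $x_{n-1}y_n \leftrightarrow y_{n+1}x_n$ manifest. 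I would take route (ii): duality exchanges $P(2n+1)^+$ with itself (it is the cover of $L(2n+1)^+$ and the category is its own highest-weight dual via Verdier duality), sends $x_n \in \Hom(P(2n+1)^+, P(2n+3)^+)$ to an element of $\Hom(P(2n+3)^+, P(2n+1)^+) = k y_{n+1}$, and this pins down $y_{n+1}$ relative to $x_n$; then a single check that the self-dual endomorphism $x_{n-1}y_n$ is nonzero (done via the explicit factorization through $\Delta(2n-1)^+$) completes the identification. The only thing to watch is that Verdier duality is only defined on $\Perv_{G_O}(\Gr^{\odd})$ after accounting for the half-integer shifts, but this is exactly the convention fixed in section \ref{RealPerverse}, so it causes no trouble.
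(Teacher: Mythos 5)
Your overall skeleton matches the paper's: compute $\dim\Hom(P(2n+1)^+,P(2m+1)^+)=[P(2m+1)^+:L(2n+1)^+]$ from the standard and Jordan--Holder filtrations of Propositions \ref{ProjStandard} and \ref{Odd*}, identify $z_n$ up to scalar as the non-invertible endomorphism, and then verify the relations. The numerology part is correct. However, the step that actually carries the content of the proposition --- showing that $x_{n-1}y_n$ and $y_{n+1}x_n$ are \emph{nonzero} --- rests on a false factorization of $x_n$, and as written your computation gives the wrong answer.

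Concretely: you claim $x_n$ factors as $P(2n+1)^+\twoheadrightarrow\Delta(2n+1)^+\hookrightarrow P(2n+3)^+$ ``using that $\Delta(2n+1)^+$ is the bottom standard layer of $P(2n+3)^+$.'' By Proposition \ref{ProjStandard} the bottom standard layer of $P(2n+3)^+$ is $\Delta(2n+5)^+$, not $\Delta(2n+1)^+$; indeed $\Delta(2n+1)^+$ has $L(2n-1)^+$ as a Jordan--Holder factor, which does not occur in $P(2n+3)^+$ at all, so no embedding $\Delta(2n+1)^+\hookrightarrow P(2n+3)^+$ exists. (The image of $x_n$ is in fact $\nabla(2n+3)^+$, generated by the unique middle-layer copy of $L(2n+1)^+$ sitting over the socle $L(2n+3)^+$.) Worse, your displayed composite for $x_{n-1}y_n$, namely $P(2n+1)^+\twoheadrightarrow\Delta(2n+1)^+\hookrightarrow P(2n-1)^+\twoheadrightarrow\Delta(2n-1)^+\hookrightarrow P(2n+1)^+$, is literally zero: the inclusion of the bottom standard $\Delta(2n+1)^+\hookrightarrow P(2n-1)^+$ is precisely the kernel of the projection onto the top standard $\Delta(2n-1)^+$. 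So the middle two arrows compose to zero, and your argument proves $x_{n-1}y_n=0$ rather than $x_{n-1}y_n\neq 0$. The correct argument (the paper's) avoids factorizations entirely: $\image y_n$ is a nonzero quotient of the projective cover $P(2n+1)^+$, hence contains the head $L(2n+1)^+$ as a Jordan--Holder factor, while $\image x_{n-1}$ is a nonzero submodule of $P(2n+1)^+$, hence contains the simple socle $L(2n+1)^+$; since $[P(2n-1)^+:L(2n+1)^+]=1$, the kernel of $x_{n-1}$ cannot contain $L(2n+1)^+$, so $\image y_n\not\subseteq\ker x_{n-1}$ and the composite is nonzero. Relatedly, your description of $\image z_n$ as ``the length-two submodule $\Delta(2n-1)^+$ of $P(2n+1)^+$'' is also wrong ($\Delta(2n-1)^+$ is not a submodule of $P(2n+1)^+$ for the same reason as above); one has $\image z_n\simeq\soc P(2n+1)^+\simeq L(2n+1)^+$, which is simple, and the relations $z_n^2=x_nz_n=y_nz_n=0$ then follow immediately from $\Hom(L(2n+1)^+,P(2m+1)^+)=0$ for $m\neq n$ together with the fact that $z_n$ kills the socle. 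Your fallback argument for $y_nz_n=0$ (``$y_nz_n$ kills the head while $y_n$ does not'') does not work as stated either, since every map $P(2n+1)^+\to P(2n-1)^+$ kills the head for trivial reasons ($L(2n+1)^+\not\simeq L(2n-1)^+$).
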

\begin{proof}
Proposition \ref{ProjStandard} and \ref{Odd*} imply that $P(2n+1)^+$ has Jordan-Holder factors $L(2n+1)^+, L(2n-1)^+, L(2n+3)^+, L(2n+1)^+$. Therefore \begin{enumerate}
\item[-] $\dim \Hom(P(2n+3)^+, P(2n+1)^+) = \dim \Hom(P(2n-1)^+, P(2n+1)^+) = 1$,
\item[-] $\dim \Hom(P(2n+1)^+, P(2n+1)^+) = 2$,
\end{enumerate}
and all other homs between projectives vanish. 
Therefore $\Hom(P(2n+1)^+, P(2n+1)^+)$ is spanned by the identity and \[z_n: P(2n+1)^+ \twoheadrightarrow \Delta(2n+1)^+ \twoheadrightarrow L(2n+1)^+ \hookrightarrow \Delta(2n+3)^+ \hookrightarrow P(2n+1)^+.\] Up to scaling $z_n \in \Hom(P(2n+1)^+, P(2n+1)^+)$ is the unique nonzero map that vanishes on $\soc P(2n+1)^+ \simeq L(2n+1)^+$.

Both $\image y_n \subset P(2n-1)^+$ and $\image x_{n-1} \subset P(2n+1)^+$ contain $L(2n+1)^+$ as a Jordan-Holder factor by proposition \ref{SocProj}. Therefore $\ker x_{n-1} \subset P(2n-1)^+$ does not contain $L(2n+1)^+$ as a Jordan-Holder factor, hence $\image y_{n} \not\subset \ker x_{2n-1}$. Thus the composition \[P(2n+1)^+ \xrightarrow{y_{n}} P(2n-1)^+ \xrightarrow{x_{n-1}} P(2n+1)^+\] is nonzero. Moreover $x_{n-1}y_{n}$ vanishes on $\soc P(2n+1)^+ \simeq L(2n+1)^+$ because $\Hom(L(2n+1)^+, P(2n-1)^+) \simeq 0$ by proposition \ref{SocProj}. A similar argument shows that $y_{n+1}x_{n}$ is nonzero and vanishes on $L(2n+1)^+ \subset P(2n+1)^+$. Therefore after rescaling we get $z_n = x_{n-1}y_n = y_{n+1}x_n$.

Since $\image z_n \simeq \soc P(2n+1)$ and $z_n, x_n, y_n$ are not injective (for Jordan-Holder factor reasons), it follows that $z_n^2 = 0$, $x_nz_n = 0$ and $y_nz_n = 0$.
\end{proof}

\section{Comparison with quantum SL(2)}
Here we construct all projectives in $\Rep_i(\SL(2))$ and calculate all maps between them, following the same strategy as in section \ref{RealSat}.

\subsection{Quantum SL(2)} The category of representations of Lusztig's quantum group splits into two blocks \[\Rep_i(\SL(2)) \simeq \Rep_i(\SL(2))^{\even} \oplus \Rep_i(\SL(2))^{\odd}\] consisting of representations with only even or odd weights respectively. Denote the Weyl, simple, and dual Weyl modules of highest weight $n$ by \[\Delta(n)^{\quan},\; L(n)^{\quan},\; \nabla(n)^{\quan} \; \in \; \Rep_i(\SL(2)).\] See \cite{And} for a survey of Lusztig's quantum group.

\begin{proposition}\label{QuantumSemisimple}
$\Rep_i(\SL(2))^{\odd}$ is semisimple.
\end{proposition}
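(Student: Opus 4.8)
The plan is to show that in the odd block every Weyl module is already simple; a highest weight category all of whose standard objects coincide with the corresponding costandard and simple objects has no higher self-extensions, hence is semisimple. The essential input is Lusztig's quantum Steinberg tensor product theorem together with the observation that at $q = i$ the quantum Frobenius has ``quantum characteristic'' $l = 2$ (the order of $q^2$), and that its target is the \emph{classical} group $\SL(2)$ over $k$, whose representation theory is semisimple since $\ch k = 0$.

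First I would compute the dimensions of the odd simples. Writing an odd dominant weight as $2m+1 = 1 + l\cdot m$ with $l=2$, the quantum Steinberg tensor product theorem gives $L(2m+1)^{\quan} \simeq L(1)^{\quan} \otimes \mathrm{Fr}^*(V_m)$, where $V_m$ is the classical irreducible $\SL(2)$-module of highest weight $m$. Since $L(1)^{\quan}$ is the two-dimensional standard representation and $\dim V_m = m+1$ (we are in characteristic zero, so the Frobenius twist contributes a semisimple classical module), this yields $\dim L(2m+1)^{\quan} = 2(m+1) = 2m+2$. On the other hand the Weyl module $\Delta(2m+1)^{\quan}$ has dimension $(2m+1)+1 = 2m+2$, and $L(2m+1)^{\quan}$ is its simple head; equality of dimensions forces $\Delta(2m+1)^{\quan} = L(2m+1)^{\quan}$, and dually $\nabla(2m+1)^{\quan} = L(2m+1)^{\quan}$.

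Finally, since $\Rep_i(\SL(2))$ is a highest weight category with standard objects the Weyl modules and costandard objects the dual Weyl modules, we have $\Ext^{i}(\Delta(\lambda)^{\quan}, \nabla(\mu)^{\quan}) = 0$ for all $i \geq 1$ and all $\lambda, \mu$. Combined with the previous paragraph this gives $\Ext^{i}(L(2m+1)^{\quan}, L(2m'+1)^{\quan}) = 0$ for all $i \geq 1$ and all $m, m' \geq 0$, so $\Rep_i(\SL(2))^{\odd}$ is semisimple. (Equivalently, BGG reciprocity shows the projective cover of $L(2m+1)^{\quan}$ has a Weyl filtration with a single subquotient $\Delta(2m+1)^{\quan} = L(2m+1)^{\quan}$, so every simple in the odd block is projective.) The only real obstacle is bookkeeping with Lusztig's conventions: one must confirm that $q = i$ indeed produces $l = 2$, that the quantum Frobenius twist lands in the characteristic-zero classical representation category, and that the Steinberg tensor product theorem and highest weight structure are valid at this specialization, for which one can cite \cite{And}. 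Once these conventions are pinned down, the argument reduces to the one-line dimension count above.
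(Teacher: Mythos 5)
Your argument is correct and follows essentially the same route as the paper: both use the quantum Steinberg tensor product theorem to identify the odd simples with $L(1)^{\quan}$ tensored with a Frobenius pullback, deduce $\Delta(2m+1)^{\quan} \simeq L(2m+1)^{\quan} \simeq \nabla(2m+1)^{\quan}$ by comparing characters/dimensions, and conclude via the vanishing of $\Ext^{1}(\Delta, \nabla)$ in the highest weight structure (which the paper spells out via projectivity of Weyl modules in the truncated subcategories $\Rep_i(\SL(2))^{\leq 2n+1}$ plus duality, rather than citing it as a general fact).
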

\begin{proof}
The Steinberg tensor product theorem (theorem 7.4 of \cite{Lus}) says \[L(2n+1)^{\quan} \simeq L(1)^{\quan} \otimes L(2n)^{\quan},\] and that $L(2n)^{\quan}$ is obtained by quantum Frobenius pullback. Therefore \[\Delta(2n+1)^{\quan} \twoheadrightarrow L(2n+1)^{\quan} \hookrightarrow \nabla(2n+1)^{\quan}\] are isomorphisms because $\Delta(2n+1)^{\quan}, L(2n+1)^{\quan}, \nabla(2n+1)^{\quan}$ have the same character.

The following standard argument shows that \[\Ext^1(L(2n+1)^{\quan}, L(2m + 1)^{\quan}) \simeq \Ext^1(\Delta(2n+1)^{\quan}, \nabla(2m + 1)^{\quan}) \simeq 0.\] The Weyl module $\Delta(2n+1)^{\quan}$ is projective in the Serre subcategory $\Rep_i(\SL(2))^{\leq 2n+1}$ of representations whose weights are at most $2n+1$. If $m \leq n$ then $\Ext^1(\Delta(2n+1)^{\quan}, \nabla(2m+1)^{\quan}) \simeq 0$. If $n < m$ then by duality $\Ext^1(\Delta(2n+1)^{\quan}, \nabla(2m+1)^{\quan}) \simeq \Ext^1(\Delta(2m+1)^{\quan}, \nabla(2n+1)^{\quan}) \simeq 0$.
\end{proof}

\subsection{Projectives}
Using semisimplicity of the odd block, we construct the indecomposable projectives in the even block.

\begin{proposition}\label{ProjInjQuan}
The module $P(2n)^{\quan} \coloneqq  L(2n+1)^{\quan} \otimes L(1)^{\quan}$ is both the projective cover and injective hull of $L(2n)^{\quan}$.
\end{proposition}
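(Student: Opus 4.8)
The plan is to mimic the strategy of Proposition~\ref{ProjConvolve} from the real side, using the duality/adjunction formalism for $\otimes$ in $\Rep_i(\SL(2))$ in place of Proposition~\ref{Duality}. First I would show that $P(2n)^{\quan} = L(2n+1)^{\quan} \otimes L(1)^{\quan}$ is projective: since $L(1)^{\quan}$ is rigid with dual $L(1)^{\quan}$ itself (the $2$-dimensional standard representation is self-dual for $\SL(2)$), the functor $\Hom(L(2n+1)^{\quan} \otimes L(1)^{\quan}, -) \simeq \Hom(L(2n+1)^{\quan}, - \otimes L(1)^{\quan})$ is a composite of the exact functor $-\otimes L(1)^{\quan}$ with $\Hom(L(2n+1)^{\quan}, -)$. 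The module $L(2n+1)^{\quan}$ lies in the semisimple odd block by Proposition~\ref{QuantumSemisimple}, so $\Hom(L(2n+1)^{\quan}, -)$ is exact on that block; and $-\otimes L(1)^{\quan}$ carries the even block into the odd block, so the composite is exact on $\Rep_i(\SL(2))^{\even}$. Hence $P(2n)^{\quan}$ is projective in the even block. By self-duality of $L(1)^{\quan}$ and $L(2n+1)^{\quan}$ (odd Weyl = simple = dual Weyl, all self-dual) together with the fact that $\Rep_i(\SL(2))$ has a contravariant duality fixing simples, the same module is injective.

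Next I would identify which indecomposable projective $P(2n)^{\quan}$ is, by computing its socle/head via the adjunction. For any simple $L(m)^{\quan}$ in the even block,
\[
\Hom(L(2n+1)^{\quan} \otimes L(1)^{\quan}, L(m)^{\quan}) \simeq \Hom(L(2n+1)^{\quan}, L(m)^{\quan} \otimes L(1)^{\quan}).
\]
By the quantum Clebsch--Gordan rule at $q = i$ one has $L(m)^{\quan} \otimes L(1)^{\quan}$ filtered with odd-weight layers; since the odd block is semisimple this tensor product is a direct sum of odd simples, and $L(2n+1)^{\quan}$ occurs in it iff $m = 2n$ (the relevant fusion is $L(2n)^{\quan}\otimes L(1)^{\quan} \simeq L(2n+1)^{\quan}\oplus L(2n-1)^{\quan}$ for $n\geq 1$, and $L(0)^{\quan}\otimes L(1)^{\quan}\simeq L(1)^{\quan}$ for $n=0$), and then with multiplicity one. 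Therefore $\Hom(P(2n)^{\quan}, L(m)^{\quan})$ is $k$ if $m = 2n$ and $0$ otherwise, so $P(2n)^{\quan}$ has simple head $L(2n)^{\quan}$ and is indecomposable; being projective with simple head $L(2n)^{\quan}$ it is the projective cover. Dually, using injectivity and the same Hom computation with the arguments reversed, its socle is $L(2n)^{\quan}$, so it is the injective hull.

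The main obstacle I anticipate is bookkeeping the quantum Clebsch--Gordan/Steinberg decomposition precisely at $q = i$ (a primitive fourth root of unity), where the $\ell = 2$ quantum Frobenius means even weights behave like the Frobenius twist of ordinary $\SL(2)$-representations while $L(0)^{\quan}, L(1)^{\quan}$ are the only simples below the Steinberg weight; I would want to state $L(2n)^{\quan}\otimes L(1)^{\quan}\simeq L(2n+1)^{\quan}\oplus L(2n-1)^{\quan}$ carefully (with the $n=0$ degeneration) and cite the Steinberg tensor product theorem as in Proposition~\ref{QuantumSemisimple}. Everything else is a formal consequence of rigidity of $L(1)^{\quan}$, exactness of tensoring, and semisimplicity of the odd block, exactly parallel to the argument already given for $P(2n)^{\pm}$ on the real side.
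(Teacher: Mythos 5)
Your overall strategy (rigidity adjunction for $\otimes$, exactness via semisimplicity of the odd block, then a Hom computation to identify the head and socle) is exactly the paper's, but the fusion rule you invoke in the key step is wrong, and wrong in a way that would break the conclusion. At $q=i$ one has $\ell=2$, so $L(2n)^{\quan}$ is the quantum Frobenius pullback of the $(n+1)$-dimensional $\SL(2)$-representation and has dimension $n+1$ (weights $2n, 2n-4,\dots,-2n$), not $2n+1$. The Steinberg tensor product theorem -- already cited in Proposition \ref{QuantumSemisimple} -- gives
\[
L(2n)^{\quan}\otimes L(1)^{\quan}\;\simeq\;L(2n+1)^{\quan},
\]
with \emph{no} second summand: a dimension count gives $2(n+1)$ on the left versus $2(n+1)+2n$ for your proposed $L(2n+1)^{\quan}\oplus L(2n-1)^{\quan}$. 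Worse, if your rule were correct then $L(2n+1)^{\quan}$ would also occur in $L(2n+2)^{\quan}\otimes L(1)^{\quan}$, so $\Hom(P(2n)^{\quan},L(2n+2)^{\quan})\neq 0$ and $P(2n)^{\quan}$ would not have simple head -- your stated conclusion ``occurs iff $m=2n$'' contradicts your own parenthetical. The classical Clebsch--Gordan decomposition you wrote down is valid for Weyl/tilting characters, and for tensor products of the even (Frobenius-pulled-back) simples with each other, but not for $L(2n)^{\quan}\otimes L(1)^{\quan}$ at a root of unity.

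The fix is immediate: replace your fusion rule by the Steinberg identity above, after which $\Hom(P(2n)^{\quan},L(2m)^{\quan})\simeq\Hom(L(2n+1)^{\quan},L(2m+1)^{\quan})=\delta_{nm}\,k$ and the rest of your argument (projectivity from exactness of $\Hom(L(2n+1)^{\quan},-\otimes L(1)^{\quan})$ on the even block, injectivity by duality, simple head implies projective cover) is sound and coincides with the paper's proof, which computes the same Hom space in the form $\Hom(L(1)^{\quan},L(1)^{\quan}\otimes L(2n)^{\quan}\otimes L(2m)^{\quan})$ using Clebsch--Gordan on the two even factors followed by Steinberg.
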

\begin{proof}
Because the odd block is semisimple, $\Hom(P(2n)^{\quan}, -) = \Hom(L(1)^{\quan}, L(2n+1)^{\quan} \otimes -)$ is exact. Moreover \[\Hom(P(2n)^{\quan}, L(2m)^{\quan}) = \Hom(L(1)^{\quan}, L(1)^{\quan} \otimes L(2n)^{\quan} \otimes L(2m)^{\quan}) = \begin{cases} k & \text{ if } n = m, \\ 0 & \text{ otherwise,}\end{cases}\] 
Therefore $P(2n)^{\quan}$ is the projective cover of $L(2n)^{\quan}$. A similar argument shows that it is also the injective hull.
\end{proof}

Now we compute all nonzero maps between projectives.

\begin{proposition}\label{ProjMapsQuan}
In the even block, all nonzero maps between projectives are spanned by 
\begin{enumerate}
\item[-] $x_n \in \Hom(P(2n)^{\quan}, P(2n+2)^{\quan})$, 
\item[-] $y_n \in \Hom(P(2n)^{\quan}, P(2n-2)^{\quan})$, \item[-]  $\id_n, z_n \in \Hom(P(2n)^{\quan}, P(2n)^{\quan})$,
\end{enumerate}
with relations $z_n = x_{n-1}y_n = y_{n+1}x_n$, $z_n^2 = 0$, $x_nz_n = 0$ and $y_nz_n = 0$.
\end{proposition}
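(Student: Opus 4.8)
The plan is to rerun the proof of Proposition~\ref{ProjMaps} in the quantum setting; the two endomorphism algebras are supposed to coincide, and the only input that is not purely formal is the Jordan--Hölder structure of $P(2n)^{\quan}$. So the first step is to compute the composition factors of $P(2n)^{\quan}=L(2n+1)^{\quan}\otimes L(1)^{\quan}$. By the Steinberg tensor product theorem $L(2n+1)^{\quan}\simeq L(1)^{\quan}\otimes L(2n)^{\quan}$ with $L(2n)^{\quan}$ a quantum Frobenius pullback, so $P(2n)^{\quan}\simeq L(2n)^{\quan}\otimes\bigl(L(1)^{\quan}\otimes L(1)^{\quan}\bigr)=L(2n)^{\quan}\otimes P(0)^{\quan}$. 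The module $L(1)^{\quan}\otimes L(1)^{\quan}$ is four dimensional with highest weight $2$ (its weights are $2,0,0,-2$) and has $L(0)^{\quan}$ as both head and socle by Proposition~\ref{ProjInjQuan}, so its composition factors are $L(0)^{\quan},L(2)^{\quan},L(0)^{\quan}$. Tensoring that filtration with the Frobenius pullback $L(2n)^{\quan}$ and using the quantum Clebsch--Gordan rule $L(2)^{\quan}\otimes L(2n)^{\quan}\simeq L(2n+2)^{\quan}\oplus L(2n-2)^{\quan}$ (immediate from Steinberg and classical $\SL(2)$ Clebsch--Gordan, with the convention $L(2n-2)^{\quan}\coloneqq 0$ when $n=0$) then gives that $P(2n)^{\quan}$ has composition factors $L(2n)^{\quan},L(2n+2)^{\quan},L(2n-2)^{\quan},L(2n)^{\quan}$ --- exactly parallel to the list for $P(2n+1)^+$ in Proposition~\ref{ProjStandard}.

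Having this, I would read off the Hom dimensions exactly as in the real case. Since $P(2m)^{\quan}$ is the injective hull of $L(2m)^{\quan}$ (Proposition~\ref{ProjInjQuan}), $\dim\Hom(P(2n)^{\quan},P(2m)^{\quan})=[P(2n)^{\quan}:L(2m)^{\quan}]$, which is $2$ for $m=n$, is $1$ for $m=n\pm1$, and vanishes otherwise. Because $P(2n)^{\quan}$ is indecomposable, $\End(P(2n)^{\quan})$ is a two dimensional local ring, hence $\simeq k[z_n]/(z_n^2)$ with $z_n$ spanning the radical; being non-invertible, $z_n$ kills the simple socle $\soc P(2n)^{\quan}=L(2n)^{\quan}$, and a count of composition factors forces $\image z_n=\soc P(2n)^{\quan}$, so in particular $z_n^2=0$. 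Next I would pick nonzero maps $x_n\in\Hom(P(2n)^{\quan},P(2n+2)^{\quan})$ and $y_n\in\Hom(P(2n)^{\quan},P(2n-2)^{\quan})$, unique up to scalar; each lands in a module whose socle is not $L(2n)^{\quan}$, so each vanishes on $\soc P(2n)^{\quan}$, whence $x_nz_n=0=y_nz_n$ since $\image z_n=\soc P(2n)^{\quan}$. Finally, to get $z_n=x_{n-1}y_n=y_{n+1}x_n$ up to rescaling I would copy the argument of Proposition~\ref{ProjMaps} word for word: $\image y_n$ has $L(2n)^{\quan}$ in its head (being a quotient of $P(2n)^{\quan}$), while $\ker x_{n-1}\subseteq P(2n-2)^{\quan}$ has no $L(2n)^{\quan}$ factor (as $[P(2n-2)^{\quan}:L(2n)^{\quan}]=1$), so $\image y_n\not\subseteq\ker x_{n-1}$ and $x_{n-1}y_n\neq0$; it vanishes on the socle because $y_n$ does, hence is a nonzero multiple of $z_n$, and symmetrically for $y_{n+1}x_n$; rescaling $x_n,y_n$ produces the stated relations.

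The main obstacle is the very first step, the composition-factor computation for $P(2n)^{\quan}$: this is where all the quantum-group content (Steinberg, the Frobenius pullback description of the even simples, the structure of $L(1)^{\quan}\otimes L(1)^{\quan}$ at a primitive fourth root of unity, and quantum Clebsch--Gordan) gets used. Once that list is in hand everything is formal highest-weight and injective-hull bookkeeping, and the proof becomes literally the proof of Proposition~\ref{ProjMaps} with the indices shifted by one --- which is exactly the point, since the resulting quiver with relations is identical and this is what will yield the equivalence \eqref{RealQuantum}.
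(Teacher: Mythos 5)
Your proposal is correct and takes essentially the same route as the paper: the paper's proof likewise consists of noting $\soc P(2n)^{\quan}\simeq L(2n)^{\quan}$ from Proposition \ref{ProjInjQuan}, asserting the Jordan--H\"older factors $L(2n)^{\quan}, L(2n-2)^{\quan}, L(2n+2)^{\quan}, L(2n)^{\quan}$ ``for character reasons'' (which your Steinberg/Clebsch--Gordan computation of $L(2n)^{\quan}\otimes L(1)^{\quan}\otimes L(1)^{\quan}$ makes explicit), and then rerunning the argument of Proposition \ref{ProjMaps}. The only cosmetic difference is that you produce $z_n$ abstractly from the two-dimensional local endomorphism ring rather than from the standard filtration, which is harmless.
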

\begin{proof}
Proposition \ref{ProjInjQuan} implies $\soc P(2n)^{\quan} \simeq L(2n)^{\quan}$. Moreover for character reasons $P(2n)^{\quan}$ admits a Jordan-Holder filtration with graded pieces $L(2n)^{\quan}, L(2n-2)^{\quan}, L(2n+2)^{\quan}, L(2n)^{\quan}$. Therefore the current proposition follows by the same argument as in proposition \ref{ProjMaps}.
\end{proof}

\subsection{The real affine Grassmannian and quantum SL(2)} Now we compare perverse sheaves on the odd component of the real affine Grassmannian to the even block of quantum $\SL(2)$ at a fourth root of unity.

\begin{theorem}
There is an equivalence of abelian categories $\Perv_{G_O}(\Gr^{\odd})^+ \simeq \Rep_i(\SL(2))^{\even}$.
\end{theorem}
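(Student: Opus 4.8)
The plan is to construct an explicit equivalence by matching the two categories against a common presentation: both are described by the same quiver with relations. The key observation is that propositions \ref{ProjMaps} and \ref{ProjMapsQuan} give identical descriptions of the morphism algebras between indecomposable projectives on each side. More precisely, let $P^+ \coloneqq \bigoplus_{n \geq 0} P(2n+1)^+$ be a projective generator of $\Perv_{G_O}(\Gr^{\odd})^+$ and let $P^{\quan} \coloneqq \bigoplus_{n \geq 0} P(2n)^{\quan}$ be a projective generator of $\Rep_i(\SL(2))^{\even}$. One forms the (locally finite dimensional) endomorphism algebras $A \coloneqq \End(P^+)$ and $A^{\quan} \coloneqq \End(P^{\quan})$. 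By proposition \ref{ProjMaps} and proposition \ref{ProjMapsQuan} the generators $x_n, y_n, z_n, \id_n$ and the relations $z_n = x_{n-1}y_n = y_{n+1}x_n$, $z_n^2 = 0$, $x_n z_n = 0$, $y_n z_n = 0$ match on the nose under the indexing bijection $P(2n+1)^+ \leftrightarrow P(2n)^{\quan}$, so there is an isomorphism of algebras $A \cong A^{\quan}$.

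First I would check that $\Perv_{G_O}(\Gr^{\odd})^+$ is equivalent to the category of finitely generated (left or right, consistently chosen) modules over $A$ that are finite dimensional, via the functor $\Hom(P^+, -)$. This is the standard Morita-type argument: the category is a finite-length abelian category in which every object has a projective cover (since it is a block of a highest weight category with enough projectives, by proposition \ref{ProjStandard} and the surrounding discussion), and $P^+$ is a projective generator, so $\Hom(P^+, -)$ is exact, faithful, and detects simples. Then I would do the same on the quantum side: $\Rep_i(\SL(2))^{\even}$ is a finite-length abelian category with enough projectives (proposition \ref{ProjInjQuan}), $P^{\quan}$ is a projective generator, and $\Hom(P^{\quan}, -)$ gives an equivalence with finite dimensional $A^{\quan}$-modules. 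Composing with the algebra isomorphism $A \cong A^{\quan}$ yields the desired equivalence $\Perv_{G_O}(\Gr^{\odd})^+ \simeq \Rep_i(\SL(2))^{\even}$.

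One subtlety is that $P^+$ and $P^{\quan}$ are infinite direct sums of projectives, so $A$ and $A^{\quan}$ are non-unital (idempotented) algebras rather than finite-dimensional algebras; the Morita statement must be phrased for locally finite dimensional modules over a locally finite dimensional algebra, i.e. for the category of finite-length modules. Since every object in either category is supported on finitely many $\Gr^n$ (resp. has finitely many weights), it is annihilated by all but finitely many of the idempotents $\id_n$, so only finitely many summands $P(2n+1)^+$ contribute to its projective cover; the infinitude of the index set causes no real trouble once one restricts to finite-length modules, which is exactly where both abelian categories live.

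The main obstacle I anticipate is not the abstract Morita formalism but pinning down the algebra isomorphism $A \cong A^{\quan}$ with enough precision: propositions \ref{ProjMaps} and \ref{ProjMapsQuan} each only assert that the listed maps \emph{span} all morphisms and satisfy the listed relations, so one must verify that these relations are a \emph{complete} set — that there are no further relations on either side — before concluding the two presentations define isomorphic algebras. The safest route is to compute $\dim \Hom(P(2i+1)^+, P(2j+1)^+)$ and $\dim \Hom(P(2i)^{\quan}, P(2j)^{\quan})$ for all $i, j$ from the Jordan--Hölder data (each projective has graded pieces $L, L', L'', L$ with the two copies of the head $L$), confirm these dimensions agree, and then check that the monomials in $x, y, z$ surviving the listed relations already exhaust that dimension count in each degree; this forces the natural map from the abstractly presented algebra onto each of $A$, $A^{\quan}$ to be an isomorphism. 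Once that bookkeeping is done the equivalence is immediate.
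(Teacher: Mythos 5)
Your proposal is correct and follows essentially the same route as the paper: the paper also deduces the theorem directly from Propositions \ref{ProjMaps} and \ref{ProjMapsQuan} by identifying both categories with finite dimensional (continuous) representations of the common endomorphism algebra of the projective generators, phrased there as the topological algebra $\prod_{n,m}\Hom(P(2n+1)^+,P(2m+1)^+)$ rather than your idempotented direct-sum version. Your worry about completeness of the relations is already handled by those propositions, since they record the exact dimensions of every Hom space between projectives, which pins down the algebra on both sides.
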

\begin{proof}
Follows by proposition \ref{ProjMaps} and \ref{ProjMapsQuan}. Indeed both sides are equivalent to finite dimensional continuous representations of the topological algebra \[\prod_{n, m} \Hom(P(2n+1)^+, P(2m+1)^+) \simeq \prod_{n, m} \Hom(P(2n)^{\quan}, P(2m)^{\quan}),\] i.e. representations such that all but finitely many components of the product act by zero.
\end{proof}


\begin{thebibliography}{5}


\bibitem[Ach21]{Ach}Achar, Pramod N. \textit{Perverse sheaves and applications to representation theory}. Vol. 258. American Mathematical Soc., 2021.

\bibitem[And91]{And} Andersen, Henning Haahr. Finite-dimensional representations of quantum groups. \textit{Algebraic groups and their generalizations: quantum and infinite-dimensional methods} (University Park, PA, 1991) 56 (1994): 1-18.

\bibitem[Bra03]{Bra} Braden, Tom. Hyperbolic localization of intersection cohomology. \textit{Transformation groups} 8 (2003): 209-216.

\bibitem[BGS96]{BGS} Beilinson, Alexander, Victor Ginzburg, and Wolfgang Soergel. Koszul duality patterns in representation theory. \textit{Journal of the American Mathematical Society} 9.2 (1996): 473-527.

\bibitem[BZSV23]{BZSV} Ben-Zvi, David, Yiannis Sakellaridis, and Akshay Venkatesh. Relative Langlands duality. (2023).

\bibitem[CN18]{CN18} Chen, Tsao-Hsien, and David Nadler. Real and symmetric quasi-maps. arXiv:1805.06564 (2018).

\bibitem[CN24]{CN24}
Chen, Tsao-Hsien, and David Nadler. Real groups, symmetric varieties and Langlands duality. arXiv:2403.13995 (2024).

\bibitem[GR15]{GR} Ginzburg, Victor, and Simon Riche. Differential operators on G/U and the affine Grassmannian. \textit{Journal of the Institute of Mathematics of Jussieu} 14.3 (2015): 493-575.

\bibitem[Lus88]{Lus88} Lusztig, George. Quantum deformations of certain simple modules over enveloping algebras. \textit{Advances in Mathematics} 70.2 (1988): 237-249.


\bibitem[Lus89]{Lus} Lusztig, George. Modular representations and quantum groups. \textit{Contemp. Math} 82.1080 (1989): 59-78.


\bibitem[MV07]{MV}Mirković, Ivan, and Kari Vilonen. Geometric Langlands duality and representations of algebraic groups over commutative rings. \textit{Annals of mathematics} (2007): 95-143.


\bibitem[Nad05]{Nad}  Nadler, David. Perverse sheaves on real loop Grassmannians. \textit{Inventiones mathematicae}, 159.1 (2005): 1-73.



\bibitem[Ric16]{Ric} Richarz, Timo. Spaces with G\textsubscript{m}-action, hyperbolic localization and nearby cycles. arXiv:1611.01669 (2016).


\end{thebibliography}
\end{document}